\newcommand{\blst}{\begin{trivlist}}
\newcommand{\elst}{\end{trivlist}}
\newtheorem{thm}{Theorem}[section]
\newtheorem{prop}[thm]{Proposition}
\newtheorem{cor}[thm]{Corollary}
\newtheorem{lem}[thm]{Lemma}
\newtheorem{conj}[thm]{Conjecture}
\newtheorem{exa}[thm]{Example}
\newtheorem{defn}[thm]{Definition}
\newtheorem{rem}[thm]{Remark}
\newcommand{\ben}{\begin{enumerate}}
\newcommand{\een}{\end{enumerate}}
\newcommand{\ble}{\begin{lem}}
\newcommand{\ele}{\end{lem}}
\newcommand{\bth}{\begin{thm}}
\renewcommand{\eth}{\end{thm}}
\newcommand{\bpr}{\begin{prop}}
\newcommand{\epr}{\end{prop}}
\newcommand{\bco}{\begin{cor}}
\newcommand{\eco}{\end{cor}}
\newcommand{\bcon}{\begin{conj}}
\newcommand{\econ}{\end{conj}}
\newcommand{\bde}{\begin{defn}}
\newcommand{\ede}{\end{defn}}
\newcommand{\bex}{\begin{exa}}
\newcommand{\eex}{\end{exa}}
\newcommand{\barr}{\begin{array}}
\newcommand{\earr}{\end{array}}
\newcommand{\btab}{\begin{tabular}}
\newcommand{\etab}{\end{tabular}}
\newcommand{\beq}{\begin{equation}}
\newcommand{\eeq}{\end{equation}}
\newcommand{\bea}{\begin{eqnarray*}}
\newcommand{\eea}{\end{eqnarray*}}
\newcommand{\beaa}{\begin{eqnarray}}
\newcommand{\eeaa}{\end{eqnarray}}
\newcommand{\bce}{\begin{center}}
\newcommand{\ece}{\end{center}}
\newcommand{\bpi}{\begin{picture}}
\newcommand{\epi}{\end{picture}}
\newcommand{\bfi}{\begin{figure} \begin{center}}
\newcommand{\efi}{\end{center} \end{figure}}
\newcommand{\bsl}{\begin{slide}{}}
\newcommand{\esl}{\end{slide}}
\newenvironment{proof}{
\par
\noindent {\bf Proof.}\rm}{\mbox{}\hfill\rule{0.5em}{0.809em}\par}
\begin{document}
\title{Generalizations of The Chung-Feller Theorem II}

\author{
Jun Ma$^{a,}$\thanks{Email address of the corresponding author:
majun@math.sinica.edu.tw}
 \and Yeong-Nan Yeh $^{b,}$\thanks{Partially supported by NSC 96-2115-M-001-005}}

\date{}
\maketitle \vspace{-1cm} \bce \footnotesize
 $^{a,b}$ Institute of Mathematics, Academia Sinica, Taipei, Taiwan\\
\ece

\thispagestyle{empty}\vspace*{.4cm}

\begin{abstract} The classical Chung-Feller theorem \cite{CF} tells us that
the number of Dyck paths of length $n$ with $m$ flaws  is the $n$-th
Catalan number and independent on $m$. L. Shapiro \cite{S} found the
Chung-Feller properties for the Motzkin paths. Mohanty's book
\cite{SGM} devotes an entire section to exploring Chung-Feller
theorem. Many Chung-Feller theorems are consequences of the results
in \cite{SGM}. In this paper, we consider the $(n,m)$-lattice paths.
We study two parameters for an $(n,m)$-lattice path: the
non-positive length and the rightmost minimum length. We obtain the
Chung-Feller theorems of the $(n,m)$-lattice path on these two
parameters by bijection methods.  We are more interested in the
pointed $(n,m)$-lattice paths. We investigate two parameters for an
pointed $(n,m)$-lattice path: the pointed non-positive length and
the pointed rightmost minimum length. We generalize the results in
\cite{SGM}.  Using the main results in this paper, we may find the
Chung-Feller theorems of many different lattice paths.
\end{abstract}

\noindent {\bf Keywords: Chung-Feller Theorem; Dyck path;  Motzkin
path}

\section{Introduction}
Let $\mathbb{Z}$ denote the set of the integers and
$[n]:=\{1,2,\ldots,n\}$. We consider $n$-Dyck paths in the plane
$\mathbb{Z}\times \mathbb{Z}$ using {\it up $(1,1)$} and {\it down
$(1,-1)$} steps that go from the origin to the point (2n,0). We say
$n$ the \emph{semilength} because there are $2n$ steps.  An $n$-{\it
flawed} path is an $n$-Dyck path that contains some steps under the
$x$-axis. The number of $n$-Dyck path that never pass below the
$x$-axis is the $n$-th Catalan number
$c_n=\frac{1}{n+1}{2n\choose{n}}$. Such paths are called the {\it
Catalan paths of length $n$}. A Dyck path is called a $(n,r)$-{\it
flawed} path if it contains $r$ up steps under the $x$-axis and its
semilength is $n$. Clearly, $0\leq r\leq n$. The classical
Chung-Feller
 theorem \cite{CF} says that the number of the $(n,r)$-{
flawed} paths is equal to $c_n$ and independent on $r$.

The classical Chung-Feller Theorem were proved by MacMahon
\cite{MacMahon}. Chung and Feller reproved this theorem by using
analytic method in \cite{CF}. T.V.Narayana \cite{N} showed the
Chung-Feller Theorem by combinatorial methods.  S. P. Eu et al.
\cite{EFY2} proved the Chung-Feller Theorem by using the Taylor
expansions of generating functions and gave a refinement of this
theorem. In \cite{EFY}, they gave a strengthening of the
Chung-Feller Theorem and a weighted version for Schr\"{o}der paths.
Y.M. Chen \cite{C} revisited the Chung-Feller Theorem by
establishing a bijection.

Mohanty's book \cite{SGM} devotes an entire section to exploring
Chung-Feller theorem. We state the result from \cite{SGM} as the
following lemma.

\begin{lem}\label{bookchung}\cite{SGM} Given a positive integer $n$, let $Y=(y_1,\ldots,y_{n+1})$ be
a sequence of integers with $1-n\leq y_i\leq 1$ for all $i\in[n+1]$
such that $\sum\limits_{i=1}^{n+1}y_i=1$. Furthermore, let
${E}(Y)=|\{i\mid \sum\limits_{j=1}^iy_j\leq 0\}|$. Let $Y_i$ be the
$i$-th cyclic permutation of $Y$ (i.e.,
$Y_i=(y_i,y_{i+1},\ldots,y_{n+i+1})$ with $y_{n+r+1}=y_r$). Then
there exists a permutation $i_1,\ldots, i_{n+1}$ on the set $[n+1]$
such that ${E}(Y_{i_1})>{E}(Y_{i_2})>\cdots
>{E}(Y_{i_{n+1}})$.\end{lem}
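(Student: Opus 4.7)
The plan is to prove the stronger statement that $E(Y_1), E(Y_2), \ldots, E(Y_{n+1})$ are already pairwise distinct; the required permutation $i_1,\ldots,i_{n+1}$ is then obtained simply by sorting them in decreasing order. Since the total sum is $1>0$, every $E(Y_i)$ lies in $\{0,1,\ldots,n\}$, so pairwise distinctness will automatically force $\{E(Y_1),\ldots,E(Y_{n+1})\}=\{0,1,\ldots,n\}$.

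The first step is to pass to partial sums. Set $T_0=0$, $T_k=y_1+\cdots+y_k$, and extend $T$ to all integers by the periodicity $T_{k+n+1}=T_k+1$, which is consistent because $\sum y_j=1$. The $j$-th partial sum of the cyclic shift $Y_i$ then equals $T_{i+j-1}-T_{i-1}$, so $E(Y_i)=|\{k : i\le k\le i+n,\ T_k\le T_{i-1}\}|$. Using $T_{i+n}=T_{i-1}+1$ to drop the right endpoint and $T_{i-1}\le T_{i-1}$ to include a trivial term on the left, I would rewrite this as
\[
E(Y_i)+1 \;=\; \big|\{k : i-1\le k\le i+n-1,\ T_k\le T_{i-1}\}\big|.
\]

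The crux is to unwrap this window of $n+1$ consecutive integers back onto $\{0,1,\ldots,n\}$ using $T_{k+n+1}=T_k+1$. The wrapped positions $k\in\{n+1,\ldots,i+n-1\}$ have their $T$-values shifted up by $1$, which converts the weak inequality $T_k\le T_{i-1}$ there into the strict inequality $T_{k-n-1}<T_{i-1}$ on $\{0,\ldots,i-2\}$. Writing $p=i-1$, this identifies $E(Y_i)+1$ as the lexicographic rank of $(T_p,-p)$ among the set $\{(T_k,-k) : 0\le k\le n\}$: sort the positions by $T$-value, with ties broken in favor of the later position. Because the second coordinates $-k$ are distinct, the $n+1$ pairs are totally ordered, so $p\mapsto\mathrm{rank}(T_p,-p)$ is a bijection from $\{0,\ldots,n\}$ onto $\{1,\ldots,n+1\}$, which delivers the distinctness of the $E(Y_i)$.

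The one subtlety — and the only place where I would be careful — is keeping the tie-breaking direction consistent: the periodic $+1$ shift is precisely what forces strict inequality on the wrapped segment and weak inequality on the unwrapped segment, and this asymmetry is exactly what makes the rank map injective when the $T_k$ values are not all distinct. Once that bookkeeping is correctly set up, the bijection, and hence the lemma, follows immediately.
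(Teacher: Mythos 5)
Your proof is correct: the reduction to partial sums, the use of $T_{i+n}=T_{i-1}+1$ to trade the right endpoint of the window for the left one, and the unwrapping step that converts the weak inequality into a strict one on the wrapped segment are all handled properly, and the resulting identity $E(Y_i)+1=\mathrm{rank}\bigl((T_{i-1},-(i-1))\bigr)$ does give the stronger conclusion that $\{E(Y_1),\dots,E(Y_{n+1})\}=\{0,1,\dots,n\}$. For comparison: the paper does not prove this lemma where it is stated (it is quoted from Mohanty's book), but the Remark following Theorem \ref{gen} notes that it is the special case $m=n+1$ of that theorem, and the mechanism there is the same as yours --- the linear order $<_P$ (order positions by partial sum, breaking ties toward the larger index) is precisely your lexicographic order on the pairs $(T_k,-k)$. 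The difference is in execution: the paper enumerates the cyclic shifts along this order and argues incrementally that the statistic increases by exactly $1$ at each step, taking for granted the identity $PNPL(\Theta(r))=\sum_{j=1}^{k-1}x_{\pi_P(j)}+t$, whereas your computation establishes the closed-form rank identity directly; in effect your unwrapping argument supplies exactly the justification the paper leaves implicit, at the cost of treating only the unit-step case $m=n+1$ rather than the general pointed $(n,m)$-lattice paths.
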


Many Chung-Feller theorems are consequences of lemma
\ref{bookchung}. First, let $\phi$ be a mapping from $\mathbb{Z}$ to
$\mathbb{P}$, where $\mathbb{P}$ is a set of all the positive
integers. Let the sequence $Y=(y_1,\ldots, y_{n+1})$ satisfy the
conditions in Lemma \ref{bookchung}. Using $(\phi(y_i),y_i)$ steps,
we can obtain a lattice path
$P(Y)=(\phi(y_1),y_1)(\phi(y_2),y_2)\ldots (\phi(y_{n+1}),y_{n+1})$
in the plane $\mathbb{Z}\times\mathbb{Z}$ that go from the origin to
the point $(\sum\limits_{i=1}^{n+1}\phi(y_i),1)$.  Using Lemma
\ref{bookchung}, we will derive the classical Chung-Feller theorem
for Dyck paths if we let $y_i\in\{1,-1\}$ and set $\phi(y)=1$ for
all $y\in\mathbb{Z}$; we will derive the Chung-Feller theorem for
Schr\"{o}der paths if we let $y_i\in\{1,0,-1\}$ and set $\phi(0)=2$
and $\phi(y)=1$ for $y\neq 0$; we will derive the Chung-Feller
theorem for Motzkin paths if we let $y_i\in\{1,0,-1\}$ and set
$\phi(0)=1$ and $\phi(y)=1$ for $y\neq 0$ and so on.

How to derive the Chung-Feller theorem for lattice paths in the
plane $\mathbb{Z}\times\mathbb{Z}$ using $(1,-1)$, $(1,1)$, $(1,0)$,
$(2,0)$ steps? For answering this problem, the authors of this paper
\cite{Ma} proved the Chung-Feller theorems for three classes of
lattice paths by using the method of the generating functions. It is
interesting that these Chung-Feller theorems can't be derivable as a
special case from lemma \ref{bookchung}. This implies that we may
generalize the results of Lemma \ref{bookchung}.

In this paper, first we give the definition of the $(n,m)$-lattice
paths. We consider two parameters for an $(n,m)$-lattice path: the
non-positive length and the rightmost minimum length. Using
bijection methods, we obtain the Chung-Feller theorems of the
$(n,m)$-lattice path on these two parameters. Then we study the
pointed $(n,m)$-lattice paths. We investigate two parameters for an
pointed $(n,m)$-lattice path: the pointed non-positive length and
the pointed rightmost minimum length. We give generalizations of the
results in \cite{SGM} and prove the Chung-Feller theorems of the
pointed $(n,m)$-lattice path on these two parameters. Finally, using
the main theorems of this paper, we may find the Chung-Feller
theorems of many different $(n,m)$-lattice paths.

This paper is organized as follows. In Section $2$, we focus on the
$(n,m)$-lattice paths. Using bijection methods, we obtain the
Chung-Feller theorems of the $(n,m)$-lattice path. In Section $3$,
we study the pointed $(n,m)$-lattice paths and give generalizations
of the results in \cite{SGM}. In Section $4$, using the main
theorems of this paper, we find the Chung-Feller theorems of many
different $(n,m)$-lattice paths.

\section{The $(n,m)$-lattice paths}
Throughout the paper, we always let $n$ and $m$ be two positive
integers with $m\geq n+1$. In this section, we will consider the
$(n,m)$-lattice paths. We will define two parameters for an
$(n,m)$-lattice path: the non-positive length and the rightmost
minimum length. Using bijection methods, we will obtain the
Chung-Feller theorems of the $(n,m)$-lattice path on these two
parameters. First, we give the definition of the $(n,m)$-lattice
paths as follows.
\begin{defn} An $(n,m)$-lattice paths $P$ is a sequence of the vectors $(x_1,y_1)(x_2,y_2)\ldots (x_{n+1},y_{n+1})$  in
$\mathbb{Z}^2$ such that:

(1) $1-n\leq y_i\leq 1$ and $\sum\limits_{i=1}^{n+1}y_i=1$

 (2) $1\leq x_i\leq m-1$ and $\sum\limits_{i=1}^{n+1}x_i=m$.
\end{defn}
$(x_i,y_i)$ is called the steps of $P$ for any $i\in [n+1]$. Since
$P$ can be viewed as a path from the origin to $(m,1)$ in the plane
$\mathbb{Z}\times \mathbb{Z}$ and has $n+1$ steps, we say that $P$
is of order $n+1$ and length $m$.
\subsection{The non-positive length of an $(n,m)$-lattice paths}
Given an $(n,m)$-lattice path $P=(x_1,y_1)(x_2,y_2)\ldots
(x_{n+1},y_{n+1})$, we let $NP(P)=\{i\mid \sum\limits_{j=1}^iy_j\leq
0\}$ and $NPL(P)=\sum\limits_{i\in NP(P)}x_i$. Clearly, $0\leq
NPL(P)\leq m-x_{n+1}\leq m-1$ since $n+1\neq NP(P)$. We say that
$NPL(P)$ is the {\it non-positive length} of the $(n,m)$-lattice
path $P$. Moreover, we define a linear order $<_P$ on the set
$[n+1]$ by the following rules:

for any $i,j\in[n+1]$, $i<_P j$ if either (1)
$\sum\limits_{k=1}^iy_k<\sum\limits_{k=1}^jy_k$ or (2)
$\sum\limits_{k=1}^iy_k=\sum\limits_{k=1}^jy_k$ and $i>j$.

The sequence formed by writing $[n+1]$ in the increasing order with
respect to $<_P$ is denoted by
$\pi_P=(\pi_P(1),\pi_P(2),\ldots,\pi_P(n+1))$.

\begin{exa}\label{examdyck111} Let $n=8$ and $m=11$. We draw an $(8,11)$-lattice path
$$P=(1,1)(1,-2)(2,1)(1,1)(1,-1)(1,-1)(1,1)(1,1)(2,0)$$ as follows.
\begin{center}
\includegraphics[width=10cm]{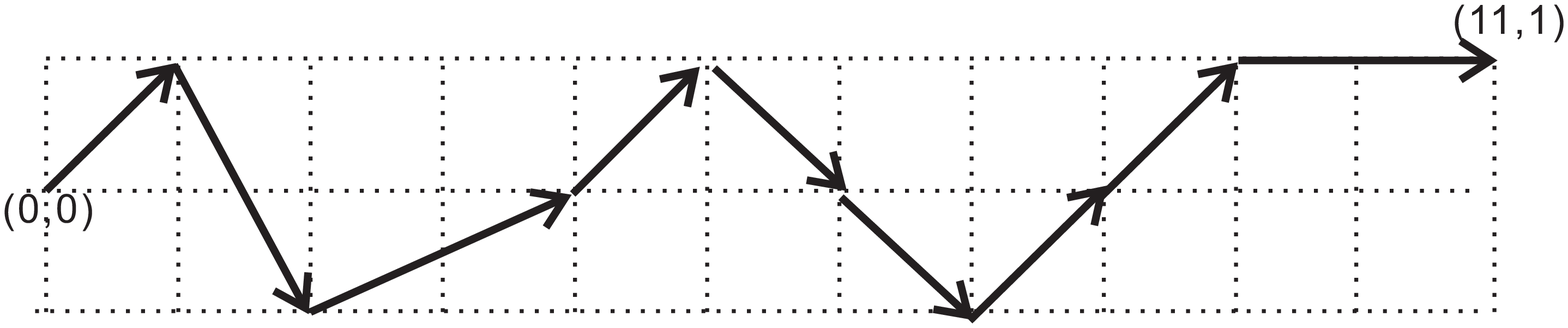}
\end{center}Then $NP(P)=\{2,3,5,6,7\}$, $NPL(P)=6$ and
$\pi_P=(6,2,7,5,3,9,8,4,1)$.
\end{exa}

We use $\mathcal{L}_{n,m,r}$ to denote the set of all the
$(n,m)$-lattice paths $P$ such that $NPL(P)=r$. In particularly, we
use  $\tilde{\mathcal{L}}_{n,m,0}$ to denote the set of all the
lattice paths $P=(x_1,y_1)(x_2,y_2)\ldots (x_{n+1},y_{n+1})$ in the
set $\mathcal{L}_{n,m,0}$ such that $x_{n+1}=1$. Clearly,
$\tilde{\mathcal{L}}_{n,m,0}\subset{\mathcal{L}}_{n,m,0}$.

\begin{lem}\label{enumeration}\item {(1) The number of the
$(n,m)$-lattice paths $P$ such that $NPL(P)=0$ is equal to
${m-1\choose{n}}c_n$;\\
(2) The number of the $(n,m)$-lattice paths
$P=(x_1,y_1)(x_2,y_2)\ldots (x_{n+1},y_{n+1})$ such that $NPL(P)=0$
and $x_{n+1}=1$ is equal to ${m-2\choose{n-1}}c_n$.}
\end{lem}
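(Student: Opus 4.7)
The plan is to observe that the condition $NPL(P)=0$ decouples the $x$-sequence from the $y$-sequence, and then count each factor separately. Since every $x_i\geq 1$, the hypothesis $NPL(P)=\sum_{i\in NP(P)}x_i=0$ forces $NP(P)=\emptyset$, i.e.\ $\sum_{j=1}^{i}y_j\geq 1$ for every $i\in[n+1]$. This condition is purely on $(y_1,\ldots,y_{n+1})$ and imposes no restriction on $(x_1,\ldots,x_{n+1})$, so I can enumerate the two sequences independently and multiply.

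Counting the $x$-sequence is easy: it is just a composition of $m$ into $n+1$ positive parts (the upper bound $x_i\leq m-1$ is automatic as soon as $n\geq 1$), so there are $\binom{m-1}{n}$ of them. For part (2), fixing $x_{n+1}=1$ reduces this to compositions of $m-1$ into $n$ positive parts, giving $\binom{m-2}{n-1}$. So everything boils down to showing that the number $N$ of integer sequences $(y_1,\ldots,y_{n+1})$ with $y_i\in[1-n,1]$, total sum $1$, and every partial sum at least $1$, equals $c_n$.

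For this Catalan count I would exhibit a bijection with classical Dyck paths of semilength $n$. First observe that $y_1=1$ is forced (since $y_1\leq 1$ and $y_1\geq 1$), so the data reduces to $(y_2,\ldots,y_{n+1})$ with entries in $[1-n,1]$, summing to $0$, with all partial sums $\geq 0$. Given a Dyck path $Q$ of semilength $n$, write it uniquely in the form
\[
Q \;=\; U D^{a_1}\,U D^{a_2}\cdots U D^{a_n},\qquad a_i\geq 0,\ \sum a_i=n,
\]
and set $y_{i+1}:=1-a_i$ for $i=1,\ldots,n$. Then $y_{i+1}\in[1-n,1]$, $\sum_{i=1}^{n}(1-a_i)=0$, and the non-negativity condition on $Q$ (namely $a_1+\cdots+a_i\leq i$) translates exactly into the partial-sum condition $\sum_{j=2}^{i+1}y_j\geq 0$. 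The inverse is transparent: read off $a_i=1-y_{i+1}$ and rebuild $Q$. This gives $N=c_n$, and combining with the composition counts yields the two formulas.

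The only nontrivial step is the bijection in the previous paragraph, and even that is routine once one notices that $y_1=1$ must be fixed; the matching between the bounds $y_i\geq 1-n$ and $a_i\leq n$ and between the partial-sum inequalities is then automatic. The rest of the argument is just multiplying independent counts.
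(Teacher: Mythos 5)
Your proof is correct and follows the same route as the paper: decouple the $y$-sequence from the $x$-sequence, count compositions of $m$ (resp.\ $m-1$) into positive parts, and multiply by the Catalan count of admissible $y$-sequences. The only difference is that the paper dismisses the Catalan count as ``well known,'' whereas you supply an explicit (and correct) bijection with Dyck paths via $y_{i+1}=1-a_i$; this is a welcome addition, not a deviation.
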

\begin{proof} (1) It is well known that the number of the solutions
of the equation $\sum\limits_{i=1}^{n+1}y_i=1$ such that $1-n\leq
y_i\leq 1$ and $NP(P)=\emptyset$ is $c_n$ and the number of the
solutions of the equation $\sum\limits_{i=1}^{n+1}x_i=m$ in positive
integers is ${m-1\choose{n}}$. Hence, The number of the
$(n,m)$-lattice paths $P$ such that $NPL(P)=0$ is equal to
${m-1\choose{n}}c_n$.

(2) Note that the number of the solutions of the equation
$\sum\limits_{i=1}^{n}x_i=m-1$ in positive integers is
${m-2\choose{n-1}}$. We immediately obtain that the number of the
$(n,m)$-lattice paths $P$ such that $NPL(P)=0$ and $x_{n+1}$ is
equal to ${m-2\choose{n-1}}c_n$.
\end{proof}

\begin{lem}\label{r>1} There is a bijection $\Phi$ from $\mathcal{L}_{n,m,r}$
to $\mathcal{L}_{n,m,r+1}$ for any $1\leq r\leq m-2$.
\end{lem}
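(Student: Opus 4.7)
My plan is to build an explicit bijection $\Phi : \mathcal{L}_{n,m,r} \to \mathcal{L}_{n,m,r+1}$ by isolating a canonical cut-point in each path $P$ and then transferring one unit of horizontal length across it. Given $P = (x_1,y_1)\cdots(x_{n+1},y_{n+1}) \in \mathcal{L}_{n,m,r}$ with $r \geq 1$, the set $NP(P)$ is non-empty, so let $k := \max NP(P)$. Since each $y_i \leq 1$, a partial sum cannot rise by more than one in a single step, so $S_k \leq 0 < S_{k+1} \leq S_k + 1$ forces $S_k = 0$, $y_{k+1}=1$ and $S_{k+1}=1$. This yields the decomposition
$$P = A \cdot (x_{k+1},1) \cdot B,$$
where $A = (x_1,y_1)\cdots(x_k,y_k)$ is a prefix ending at height $0$ and absorbing the entire non-positive length $NPL(A) = r$, and $B = (x_{k+2},y_{k+2})\cdots(x_{n+1},y_{n+1})$ has all running partial sums strictly positive.

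In the generic case $x_{k+1} \geq 2$, I would define $\Phi(P)$ by shrinking the distinguished up-step and lengthening the step at position $k$: replace $(x_{k+1},1)$ with $(x_{k+1}-1,1)$ and $(x_k,y_k)$ with $(x_k+1,y_k)$. All partial sums $S_i$ and the whole $y$-sequence are unchanged, so $NP$ is preserved, while one additional unit of horizontal length now sits on the $NP$-step at position $k$; hence $\Phi(P) \in \mathcal{L}_{n,m,r+1}$. The inverse is the symmetric transfer: identify the same cut-point $k$ on the image side (it is intrinsic to the $y$-sequence and partial sums, which are preserved), and move one unit of $x$-weight back from $(x_k+1,y_k)$ into the up-step. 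On the generic part of both spaces, $\Phi$ and $\Phi^{-1}$ are immediately seen to be mutually inverse.

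The main technical obstacle is the degenerate case $x_{k+1} = 1$, where the distinguished up-step cannot be directly shortened. I plan to resolve this by selecting a secondary canonical non-$NP$ step of width $\geq 2$ from which to transfer (e.g.\ the rightmost such step), using the hypothesis $r \leq m-2$ together with a canonical cyclic rotation to cover the sub-case in which every non-$NP$ step already has unit width. The delicate part of the argument is arranging these secondary choices so that the two branches of $\Phi$ produce disjoint images and so that $\Phi^{-1}$ can unambiguously detect which branch generated a given image and then reverse the appropriate operation. Once this case analysis is organized cleanly --- verifying in each branch that $\Phi(P)$ still decomposes canonically at the same index $k$, so that the intended inverse recovers $P$ --- bijectivity reduces to a routine check, establishing $|\mathcal{L}_{n,m,r}| = |\mathcal{L}_{n,m,r+1}|$.
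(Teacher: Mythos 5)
Your generic case is sound: with $k=\max NP(P)$ one indeed gets $S_k=0$, $y_{k+1}=1$, and moving one unit of width from the non-$NP$ step $k+1$ to the $NP$ step $k$ leaves the $y$-sequence (hence $NP(P)$) untouched and raises $NPL$ by exactly $1$. But the entire content of the lemma lives in the case you defer. When $x_{k+1}=1$, and a fortiori when \emph{every} non-$NP$ step has unit width, you only announce a plan --- ``select a secondary canonical donor'' or ``a canonical cyclic rotation'' --- without defining the map, without verifying that the chosen rotation changes $NPL$ by exactly $+1$ (this is not automatic: a cyclic shift permutes all the partial sums, and one must prove, via the bookkeeping the paper encodes in the order $<_P$ and the sequence $\pi_P$, that rotating at the correct index moves exactly one more unit of width into the non-positive region), and without actually using the hypothesis $1\le r\le m-2$. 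The paper's proof spends essentially all of its effort here: it locates $n+1$ at position $k$ of $\pi_P$, rotates at $i=\pi_P(k+1)$ when the donor step has width $1$, transfers width onto $x_{\pi_P(k-1)}$ otherwise, and invokes $r\le m-2$ precisely to guarantee the donor has width at least $2$ in the extreme case $k=n+1$.

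The second gap is bijectivity. You correctly flag that $\Phi^{-1}$ must detect which branch produced a given $Q\in\mathcal{L}_{n,m,r+1}$, but the obvious discriminator fails: after your generic move the donor step has width $x_{k+1}-1$, which can equal $1$ (namely when $x_{k+1}=2$), exactly the value a degenerate-branch image would also exhibit there, so inspecting the donor step does not separate the two images, and you propose no alternative invariant. The paper resolves this by making the \emph{receiving} step the discriminator: its inverse looks at $i=\pi_P(k-1)$ and tests whether $x_i=1$ (undo a rotation) or $x_i\ge 2$ (undo a transfer), and the forward map is engineered so that exactly one of these occurs on each branch. Until you fully specify the degenerate branch and exhibit such a discriminator, what you have is an injection defined on part of $\mathcal{L}_{n,m,r}$, not a bijection onto $\mathcal{L}_{n,m,r+1}$.
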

\begin{proof} Let $P=(x_1,y_1)(x_2,y_2)\ldots
(x_{n+1},y_{n+1})\in \mathcal{L}_{n,m,r}$. Consider the sequence
$\pi_P$.
 Suppose $\pi_P(k)=n+1$ for some $k$. Since $r\geq 1$, we have $k\geq
 2$. We discuss the following two cases.

 {\it Case I.} $k\leq n$

 If $x_{n+1}=1$, then let $i=\pi_P(k+1)$ and $$\Phi(P)=(x_{i+1},y_{i+1})\ldots
(x_{n+1},y_{n+1})(x_{1},y_{1})\ldots (x_{i},y_{i}).$$

 If $x_{n+1}\geq 2$, then let $i=\pi_P(k-1)$ and $$\Phi(P)=(x_{1},y_{1})\ldots
(x_{i}+1,y_{i})\ldots (x_{n+1}-1,y_{n+1}).$$

{\it Case II.} $k=n+1$

Note that $x_{n+1}\geq 2$ since $r\leq m-2$. We let $i=\pi_P(n)$ and
 $$\Phi(P)=(x_{1},y_{1})\ldots (x_{i}+1,y_{i})\ldots
(x_{n+1}-1,y_{n+1}).$$  It is easy to see that $\Phi(P)\in
\mathcal{L}_{n,m,r+1}$ for Cases I and II.

For proving that $\Phi$ is a bijection, we describe the inverse of
$\Phi$ as follows.

Let $P=(x_1,y_1)(x_2,y_2)\ldots (x_{n+1},y_{n+1})\in
\mathcal{L}_{n,m,r+1}$, where $1\leq r\leq m-2$. Suppose
$\pi_P(k)=n+1$ for some $k$. Let $i=\pi_P(k-1)$. If $x_i=1$, then
let $$\Phi^{-1}(P)=(x_{i+1},y_{i+1})\ldots
(x_{n+1},y_{n+1})(x_{1},y_{1})\ldots (x_{i},y_{i});$$otherwise,
let$$\Phi^{-1}(P)=(x_{1},y_{1})\ldots (x_{i}-1,y_{i})\ldots
(x_{n+1}+1,y_{n+1}).$$ This complete the proof.
\end{proof}
\begin{exa} Let $n=3$ and $m=5$. We draw $(3,5)$-lattice paths
\begin{eqnarray*}P_1=(1,1)(1,1)(1,-2)(2,1)&P_2=(1,1)(1,1)(2,-2)(1,1)\\
P_3=(1,1)(2,-2)(1,1)(1,1)&P_4=(2,-2)(1,1)(1,1)(1,1)\end{eqnarray*}
as follows.
\begin{center}
\includegraphics[width=14cm]{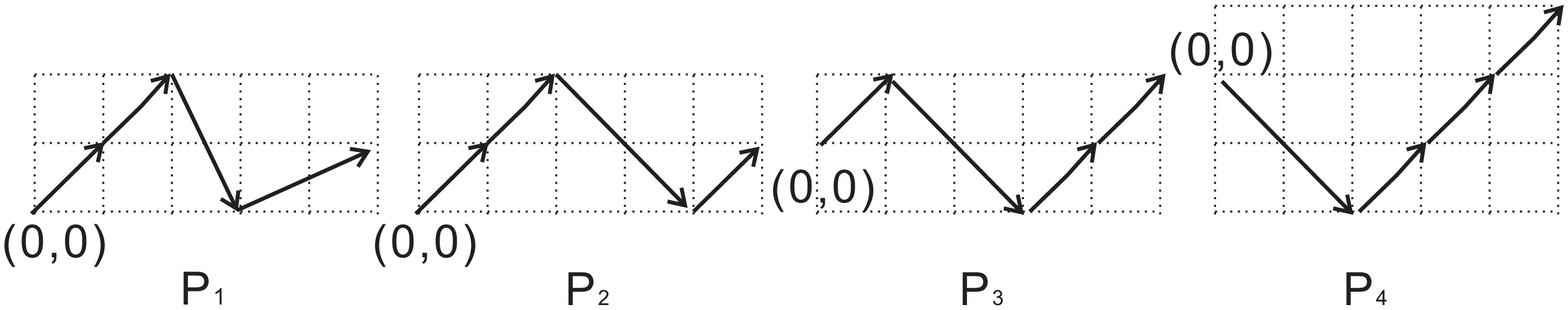}
\end{center}
We have $\Phi(P_i)=P_{i+1}$ and $NPL(P_i)=i$.
\end{exa}

\begin{lem}\label{r=0}There is a bijection from $\tilde{\mathcal{L}}_{n,m,0}$
to $\mathcal{L}_{n,m,1}$.
\end{lem}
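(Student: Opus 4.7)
The plan is to construct an explicit bijection $\Psi$ that mirrors the cyclic-shift half of Lemma \ref{r>1}. Given $P=(x_1,y_1)\ldots(x_{n+1},y_{n+1})\in\tilde{\mathcal{L}}_{n,m,0}$, every partial sum $s_j:=\sum_{k=1}^j y_k$ is at least $1$ and $s_{n+1}=1$; combined with the tie-breaking rule in $<_P$ (equal partial sums, larger index first), this forces $\pi_P(1)=n+1$. I would then set $i=\pi_P(2)$ and define
\[
\Psi(P)=(x_{i+1},y_{i+1})\ldots(x_{n+1},y_{n+1})(x_1,y_1)\ldots(x_i,y_i),
\]
which is exactly the cyclic-shift formula used in Case I of Lemma \ref{r>1} when $x_{n+1}=1$.

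The next step is to verify $\Psi(P)\in\mathcal{L}_{n,m,1}$. The partial sums of $\Psi(P)$ read $s_{i+p}-s_i$ for $p\leq n+1-i$ and $s_{p-n-1+i}+1-s_i$ for $p>n+1-i$. By the choice of $i=\pi_P(2)$, namely the largest index $j<n+1$ realising the minimum of $s_j$ over $j<n+1$, one has $s_{i+p}\geq s_i$ when $1\leq p<n+1-i$ and $s_q\geq s_i$ when $1\leq q\leq i$. Hence the only position whose new partial sum is $\leq 0$ is $p=n+1-i$, where it equals $1-s_i\leq 0$. That position carries the step $(x_{n+1},y_{n+1})$ with $x_{n+1}=1$, so $NPL(\Psi(P))=1$ as required.

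For the inverse, note that $NPL(Q)=1$ together with $x_j\geq 1$ forces $NP(Q)=\{i_0\}$ with $x_{i_0}=1$. I would define $\Psi^{-1}(Q)$ as the cyclic shift of $Q$ by $i_0$ positions forward, and check that every partial sum of the resulting path is at least $1$ (which follows from $s'_j\geq 1>s'_{i_0}$ for $j\neq i_0$) and that its last $x$-coordinate remains $x'_{i_0}=1$, placing it in $\tilde{\mathcal{L}}_{n,m,0}$. The composition $\Psi^{-1}\circ\Psi=\mathrm{id}$ is then immediate because the total shift amounts to $i+(n+1-i)=n+1\equiv 0$.

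The main obstacle I expect is the reverse composition $\Psi\circ\Psi^{-1}=\mathrm{id}$, which reduces to proving $\pi_{\Psi^{-1}(Q)}(2)=n+1-i_0$. This calls for a short case split on whether $s'_{i_0}<0$ or $s'_{i_0}=0$: in the first case $n+1-i_0$ is the largest index of $\Psi^{-1}(Q)$ realising the second-smallest partial sum, while in the second case several indices of $\Psi^{-1}(Q)$ may share partial sum $1$, and one must verify that $n+1-i_0$ is the largest such index below $n+1$. Once this identification is established in both subcases, the bijection property—and hence the lemma—follows.
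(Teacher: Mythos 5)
Your construction is exactly the paper's: it too sends $P$ to the cyclic shift beginning after position $i=\pi_P(2)$ and inverts by cyclically shifting $Q$ at the unique index $i_0=\pi_Q(1)$ of $NP(Q)$, so you have reproduced the paper's proof while supplying more detail than it does. The one step you leave flagged, $\pi_{\Psi^{-1}(Q)}(2)=n+1-i_0$, does go through and needs no case split on the sign of $s'_{i_0}$: among the shifted partial sums the value $1-s'_{i_0}$ at position $n+1-i_0$ is minimal over positions in $[n]$, and every position strictly beyond $n+1-i_0$ has partial sum at least $2-s'_{i_0}$, so $n+1-i_0$ is the largest minimizer (likewise, in your forward verification the inequality $s_{i+p}\geq s_i$ should be the strict $s_{i+p}>s_i$, which your choice of $i$ as the largest minimizer already guarantees).
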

\begin{proof} Let $P=(x_1,y_1)(x_2,y_2)\ldots (x_{n+1},y_{n+1})\in
\tilde{\mathcal{L}}_{n,m,0}$. Consider the sequence $\pi_P$. Note
that $\pi_P(1)=n+1$ for any $P\in\mathcal{L}_{n,m,0}$. So, let
$i=\pi_{P}(2)$. Let the mapping $\Phi$ be defined as that in Lemma
\ref{r>1}, i.e., $\Phi(P)=(x_{i+1},y_{i+1})\ldots
(x_{n+1},y_{n+1})(x_{1},y_{1})\ldots (x_{i},y_{i}).$  Then
$\Phi(P)\in\mathcal{L}_{n,m,1}$. Conversely, for any
$P=(x_1,y_1)(x_2,y_2)\ldots (x_{n+1},y_{n+1})\in
{\mathcal{L}}_{n,m,1}$, we have $\pi_P(2)=n+1$. Suppose
$\pi_P(1)=i$, then $x_i=1$. This tells us that $\Phi$ is a bijection
from $\tilde{\mathcal{L}}_{n,m,0}$ to $\mathcal{L}_{n,m,1}$.
\end{proof}

\begin{thm} For any $1\leq r\leq m-1$, the number of the
$(n,m)$-lattice paths $P$ such that $NPL(P)=r$ is equal to the
number of the $(n,m)$-lattice paths $P=(x_1,y_1)(x_2,y_2)\ldots
(x_{n+1},y_{n+1})$ such that $NPL(P)=0$ and $x_{n+1}=1$ and
independent on $r$.
\end{thm}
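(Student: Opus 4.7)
The proof is essentially a direct combination of the two bijective lemmas just proved, so the plan is to chain them together. Specifically, Lemma \ref{r=0} provides a bijection $\Phi:\tilde{\mathcal{L}}_{n,m,0}\to \mathcal{L}_{n,m,1}$, and Lemma \ref{r>1} provides, for each $1\le r\le m-2$, a bijection $\Phi:\mathcal{L}_{n,m,r}\to \mathcal{L}_{n,m,r+1}$. Composing these successively yields a bijection $\tilde{\mathcal{L}}_{n,m,0}\to \mathcal{L}_{n,m,r}$ for every $1\le r\le m-1$, from which the equality of cardinalities (and independence from $r$) follows at once.

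Concretely, I would argue as follows. Fix $r$ with $1\le r\le m-1$. By Lemma \ref{r=0} we have $|\tilde{\mathcal{L}}_{n,m,0}|=|\mathcal{L}_{n,m,1}|$. If $r=1$ we are done. Otherwise, iterating Lemma \ref{r>1} gives
\[
|\mathcal{L}_{n,m,1}|=|\mathcal{L}_{n,m,2}|=\cdots=|\mathcal{L}_{n,m,r}|,
\]
which together with the previous equality yields $|\mathcal{L}_{n,m,r}|=|\tilde{\mathcal{L}}_{n,m,0}|$. Since the right-hand side does not depend on $r$, independence is immediate. This is the entire content of the argument.

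There is no real obstacle: all the combinatorial work has already been done in Lemmas \ref{r=0} and \ref{r>1}, and the current statement is the clean packaging of those bijections into a single Chung--Feller-type equidistribution. If one wanted a slightly more quantitative statement, one could also invoke Lemma \ref{enumeration}(2) to identify the common cardinality as $\binom{m-2}{n-1}c_n$, though the theorem as stated only asks for the equality between the two cardinalities.
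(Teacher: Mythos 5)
Your proof is correct and is exactly the paper's argument: the authors likewise prove the theorem by combining Lemma \ref{r=0} (a bijection from $\tilde{\mathcal{L}}_{n,m,0}$ to $\mathcal{L}_{n,m,1}$) with the chain of bijections $\mathcal{L}_{n,m,r}\to\mathcal{L}_{n,m,r+1}$ from Lemma \ref{r>1}. Nothing is missing.
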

\begin{proof} Combining Lemmas \ref{r>1} and \ref{r=0}, we
immediately obtain the results as desired.
\end{proof}
\subsection{The rightmost minimum length of an $(n,m)$-lattice paths}
Given a $(n,m)$-lattice path $P=(x_1,y_1)(x_2,y_2)\ldots
(x_{n+1},y_{n+1})$, we let $a_0=0$, $b_0=0$,
$a_i=\sum\limits_{j=1}^iy_j$ and $b_i=\sum\limits_{j=1}^ix_j$ for
$i\geq 1$. Then the $(n,m)$-lattice path $P$ can be viewed as a
sequence of the points in the plane $\mathbb{Z}\times\mathbb{Z}$
$$(b_0,a_0),(b_1,a_1),\ldots,(b_{n+1},a_{n+1}).$$
A {\it minimum point} of the path $P$ is a point $(b_i,a_i)$ such
that $a_i\leq a_j$ for all $j\neq i$. A {\it rightmost minimum
point} is a minimum point $(b_i,a_i)$ such that the point is the
rightmost one among all the minimum points. If $(b_i,a_i)$ is {the
minimum point} of the path $P$, we call $b_i$ the {\it rightmost
minimum length} of the $(n,m)$-lattice paths $P$, denoted by
$RML(P)$.
\begin{exa} We consider the path $P$ in Example \ref{examdyck111}.
The point $(7,-1)$ is the rightmost minimum point and $RML(P)=7$.
\end{exa}
We use $\mathcal{M}_{n,m,r}$ to denote the set of all the
$(n,m)$-lattice paths $P$ such that $RML(P)=r$.

\begin{lem}\label{mr>1} There is a bijection $\Psi$ from $\mathcal{M}_{n,m,r}$
to $\mathcal{M}_{n,m,r+1}$ for any $1\leq r\leq m-2$.
\end{lem}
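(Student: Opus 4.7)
The plan is to construct $\Psi$ by a case split on $x_{i^*+1}$, paralleling the case split on $x_{n+1}$ in the proof of Lemma~\ref{r>1}. Given $P=(x_1,y_1)\ldots(x_{n+1},y_{n+1})\in\mathcal{M}_{n,m,r}$ with $r\ge 1$, let $i^*$ be the index of the rightmost-minimum vertex $v_{i^*}$, so that $b_{i^*}=r$. Since $r\ge 1$, $a_0=0\ge a_{i^*}$ and $a_{n+1}=1>a_{i^*}$, we have $1\le i^*\le n$; rightmost-minimality then gives $a_{i^*+1}>a_{i^*}$ and, combined with $y_j\le 1$, forces $y_{i^*+1}=1$.

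In Case~A ($x_{i^*+1}\ge 2$), I would define $\Psi(P)$ by the local $x$-transfer $x'_{i^*}:=x_{i^*}+1$ and $x'_{i^*+1}:=x_{i^*+1}-1$, with every other step-coordinate unchanged. Since the $y$-profile is intact, $i^*$ is still the rightmost-minimum index of $\Psi(P)$, now with $b'_{i^*}=r+1$; the bounds $1\le x'_j\le m-1$ follow from $x_{i^*+1}\ge 2$ at position $i^*+1$ and from the global estimate $x_{i^*}\le m-(n+1)\le m-2$ at position $i^*$. The image satisfies $x'_{i^*}\ge 2$, which is precisely how the inverse will detect Case~A.

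In Case~B ($x_{i^*+1}=1$) the $x$-transfer is unavailable, so I would instead swap the $y$-coordinate at position $i^*+1$ with that of an adjacent position (holding the $x$-coordinates fixed), chosen so that $v_{i^*+1}$ becomes the new rightmost-minimum vertex at $x$-coordinate $b_{i^*+1}=r+1$. The two natural candidates are swapping with $y_{i^*}$ (which works when $a_{i^*}$ is the strict unique minimum of the $a_j$, so that the promoted vertex sits at the new minimum height $a_{i^*}+1$) and swapping with $y_{i^*+2}$ (which works when $y_{i^*+2}\le 0$, so that $v_{i^*+1}$ inherits the old minimum height); the exact rule is pinned down so that the image satisfies $x'_{i^{**}}=1$ with $i^{**}=i^*+1$. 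In degenerate profiles where neither elementary swap suffices, a more intricate cyclic rearrangement patterned on the $x_{n+1}=1$ branch of Lemma~\ref{r>1} (and using the order $<_P$) is invoked.

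For bijectivity, I would define $\Psi^{-1}\colon\mathcal{M}_{n,m,r+1}\to\mathcal{M}_{n,m,r}$ by the symmetric dichotomy on the rightmost-minimum index $i^{**}$ of the input: invert Case~A by the reverse $x$-transfer when $x'_{i^{**}}\ge 2$, and invert Case~B by the reverse $y$-swap when $x'_{i^{**}}=1$. The hard part will be Case~B: one must specify the swap so that it both produces a valid lattice path and lands in the $x'_{i^{**}}=1$ branch of the inverse, so that Cases~A and~B have disjoint images partitioning $\mathcal{M}_{n,m,r+1}$ and the two inverse branches recover $P$.
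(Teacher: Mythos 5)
Your Case A is fine (and the bound that actually guarantees $x'_{i^*}\le m-1$ is $x_{i^*}\le b_{i^*}=r\le m-2$, not the global estimate $x_{i^*}\le m-(n+1)$, which is false in general since the other $n$ coordinates only force $x_{i^*}\le m-n$). The genuine gap is Case B: what you describe there is not a construction but a list of two partial candidates plus an unspecified fallback. The two $y$-swaps do not cover $\mathcal{M}_{n,m,r}$. Concretely, take $n=3$, $m=5$, $P=(1,-1)(1,0)(1,1)(2,1)$: the heights are $0,-1,-1,0,1$, so $i^*=2$ and $RML(P)=2\in\{1,\dots,m-2\}$, and $x_{i^*+1}=1$ puts you in Case B. Here the minimum is not unique ($a_1=a_2=-1$), so swapping $y_{i^*}$ with $y_{i^*+1}$ relocates the rightmost minimum to position $1$ rather than $i^*+1$; and $y_{i^*+2}=1>0$, so the other swap is also unavailable (indeed it fixes $P$). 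Your proposal is undefined on such inputs, and even where it is defined you have not shown that the branches have disjoint images covering $\mathcal{M}_{n,m,r+1}$, nor that the single detector $x'_{i^{**}}=1$ lets the inverse decide which $y$-swap was applied. So as it stands this is not a proof.

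The idea you are missing is that no surgery near the minimum vertex (and no modification of the $y$-sequence at all) is needed. The paper's $\Psi$ acts at the two ends of the path: if $x_{n+1}\ge 2$, transfer one unit from $x_{n+1}$ to $x_1$; this leaves every height $a_i$ unchanged and increases every $b_i$ with $1\le i\le n$ by one, hence increases $RML$ by one since $r\ge 1$ forces $i^*\in[1,n]$. If $x_{n+1}=1$, cyclically rotate the last step to the front; this shifts every height by the constant $y_{n+1}$ and every index by one, so the relative order of the heights is preserved, the rightmost minimum moves from $i^*$ to $i^*+1$ (one checks $y_{n+1}+a_{i^*}\le 0$ using $i^*<n$, which holds because $b_n=m-1>r$), and its abscissa becomes $x_{n+1}+b_{i^*}=r+1$. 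The inverse is read off from whether $x_1=1$. This global cyclic-shift mechanism is exactly what should replace your Case B.
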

\begin{proof} Let $P=(x_1,y_1)(x_2,y_2)\ldots
(x_{n+1},y_{n+1})\in \mathcal{M}_{n,m,r}$. If $x_{n+1}=1$, we let
$$\Psi(P)=(x_{n+1},y_{n+1})(x_{1},y_{1})\ldots (x_{n},y_{n});$$
otherwise let $$\Psi(P)=(x_{1}+1,y_{1})(x_2,y_2)\ldots
(x_n,y_n)(x_{n+1}-1,y_{n+1}).$$ It is easy to see that $\Psi(P)\in
\mathcal{M}_{n,m,r+1}$.

For proving that $\Phi$ is a bijection, we describe the inverse of
$\Phi$ as follows.

If $x_{1}=1$, we let
$$\Psi(P)=(x_{2},y_{2})(x_{3},y_{3})\ldots (x_{n+1},y_{n+1})(x_1,y_1);$$
otherwise let $$\Psi(P)=(x_{1}-1,y_{1})(x_2,y_2)\ldots
(x_n,y_n)(x_{n+1}+1,y_{n+1}).$$ This complete the proof.
\end{proof}
\begin{exa} Let $n=3$ and $m=5$. We draw $(3,5)$-lattice paths
\begin{eqnarray*}P_1=(1,-2)(2,1)(1,1)(1,1)&P_2=(1,1)(1,-2)(2,1)(1,1)\\
P_3=(1,1)(1,1)(1,-2)(2,1)&P_4=(2,1)(1,1)(1,-2)(1,1)\end{eqnarray*}
as follows.
\begin{center}
\includegraphics[width=14cm]{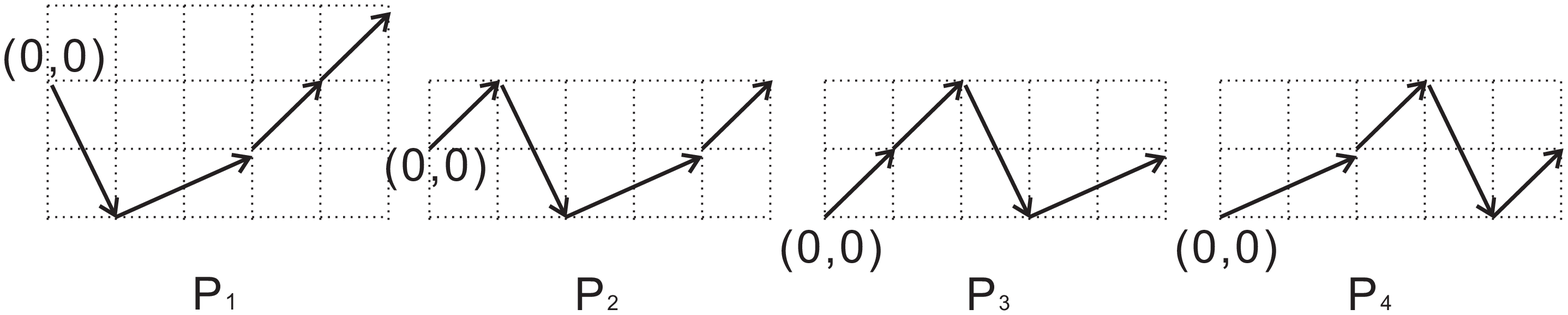}
\end{center}
We have $\Psi(P_i)=P_{i+1}$ and $RML(P_i)=i$.
\end{exa}
Note that $NPL(P)=0$ if and only if $RML(P)=0$ for any
$(n,m)$-lattice path. Recall that $\tilde{\mathcal{L}}_{n,m,0}$ is
the set of all the lattice paths $P=(x_1,y_1)(x_2,y_2)\ldots
(x_{n+1},y_{n+1})$ in the set $\mathcal{L}_{n,m,0}$ such that
$x_{n+1}=1$. Hence, also $\tilde{\mathcal{L}}_{n,m,0}$ is the set of
all the lattice paths $P=(x_1,y_1)(x_2,y_2)\ldots (x_{n+1},y_{n+1})$
in the set $\mathcal{M}_{n,m,0}$ such that $x_{n+1}=1$.
\begin{lem}\label{mr=0}There is a bijection from $\tilde{\mathcal{L}}_{n,m,0}$
to $\mathcal{M}_{n,m,1}$.
\end{lem}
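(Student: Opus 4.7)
I plan to establish the bijection by the same cyclic rotation that drives Lemma~\ref{mr>1}, namely moving the last step to the front. Given $P = (x_1,y_1)\ldots(x_{n+1},y_{n+1}) \in \tilde{\mathcal{L}}_{n,m,0}$, so that $NPL(P) = 0$ and $x_{n+1} = 1$, set
\[
\Psi(P) = (x_{n+1},y_{n+1})(x_1,y_1)\ldots(x_n,y_n) = (1,y_{n+1})(x_1,y_1)\ldots(x_n,y_n).
\]
All defining constraints of an $(n,m)$-lattice path are invariant under cyclic permutation of the step sequence, so $\Psi(P)$ is again such a path. The main task is to verify $RML(\Psi(P)) = 1$.

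Write $a'_i$ for the vertical partial sums of $\Psi(P)$ and $a_i = \sum_{k=1}^i y_k$ for those of $P$. Then $a'_0 = 0$, $a'_1 = y_{n+1}$, and $a'_i = y_{n+1} + a_{i-1}$ for $2 \leq i \leq n+1$. The hypothesis $NPL(P) = 0$ is equivalent to $a_j \geq 1$ for every $j \in [n+1]$; in particular $a_n \geq 1$ forces $y_{n+1} = 1 - a_n \leq 0$. Hence $a'_1 \leq a'_0$, and for $i \geq 2$ we have $a'_i \geq y_{n+1} + 1 > a'_1$. So the minimum of $a'_0, \ldots, a'_{n+1}$ is attained at $i = 1$ (and possibly also at $i = 0$, exactly when $a_n = 1$), and in either case the rightmost minimum lies at $i = 1$; thus $RML(\Psi(P)) = b'_1 = x_{n+1} = 1$.

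For the inverse, given $Q = (x'_1,y'_1)\ldots(x'_{n+1},y'_{n+1}) \in \mathcal{M}_{n,m,1}$, the horizontal partial sums $b'_i$ strictly increase (since each $x'_j \geq 1$), so $RML(Q) = 1$ forces the rightmost minimum to occur at index $1$, which in turn requires $x'_1 = 1$. Define
\[
\Psi^{-1}(Q) = (x'_2,y'_2)\ldots(x'_{n+1},y'_{n+1})(x'_1,y'_1).
\]
Its last step has $x$-coordinate $1$, and its vertical partial sums equal $a'_{i+1} - a'_1$ for $1 \leq i \leq n$; since $a'_1 < a'_{i+1}$ for all such $i$, these are all $\geq 1$, which gives $NPL(\Psi^{-1}(Q)) = 0$. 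Hence $\Psi^{-1}(Q) \in \tilde{\mathcal{L}}_{n,m,0}$, and the two maps are mutual inverses by construction.

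The only subtle point I anticipate is the boundary case $y_{n+1} = 0$ (equivalently $a_n = 1$), where the origin and the new first step both lie at height $0$ in $\Psi(P)$. Here it is essential that the \emph{rightmost} minimum, rather than the leftmost, is used in the definition of $RML$, so the tie is broken in favor of index $i = 1$ and the argument still yields $RML(\Psi(P)) = 1$. Every other step is routine bookkeeping about partial sums.
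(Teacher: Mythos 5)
Your proof is correct and uses exactly the same bijection as the paper (cyclically moving the last step, which has $x_{n+1}=1$, to the front, with the obvious inverse); the paper merely asserts that $\Psi(P)\in\mathcal{M}_{n,m,1}$ and that the inverse lands in $\tilde{\mathcal{L}}_{n,m,0}$, whereas you verify these claims in detail, including the tie-breaking role of the \emph{rightmost} minimum when $y_{n+1}=0$.
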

\begin{proof} Let $P=(x_1,y_1)(x_2,y_2)\ldots
(x_{n+1},y_{n+1})\in \tilde{\mathcal{L}}_{n,m,0}$. Then $x_{n+1}=1$
and $y_{n+1}\leq 0$. We let
$$\Psi(P)=(x_{n+1},y_{n+1})(x_{1},y_{1})\ldots (x_{n},y_{n}).$$
Clearly, $\Psi(P)\in \mathcal{M}_{n,m,1}$.

Conversely, let $P=(x_1,y_1)(x_2,y_2)\ldots (x_{n+1},y_{n+1})\in
\tilde{\mathcal{L}}_{n,m,1}$. Then $x_{1}=1$ and $y_{1}\leq 0$. We
let
$$\Psi(P)=(x_{2},y_{2})(x_{3},y_{3})\ldots (x_{n+1},y_{n+1})(x_1,y_1).$$
This complete the proof.
\end{proof}

\begin{thm} For any $1\leq r\leq m-1$, the number of the
$(n,m)$-lattice paths $P$ such that $RML(P)=r$ is equal to the
number of the $(n,m)$-lattice paths $P=(x_1,y_1)(x_2,y_2)\ldots
(x_{n+1},y_{n+1})$ such that $RML(P)=0$ and $x_{n+1}=1$ and
independent on $r$.
\end{thm}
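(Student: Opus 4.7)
The plan is to mimic the argument that established the analogous result for the non-positive length: chain together the two bijections already proved in Lemmas \ref{mr>1} and \ref{mr=0} to obtain, for each $r$ with $1\leq r\leq m-1$, a bijection between $\tilde{\mathcal{L}}_{n,m,0}$ and $\mathcal{M}_{n,m,r}$. Since $\tilde{\mathcal{L}}_{n,m,0}$ is explicitly identified in the paragraph preceding Lemma \ref{mr=0} as the set of $(n,m)$-lattice paths $P=(x_1,y_1)\ldots(x_{n+1},y_{n+1})$ with $RML(P)=0$ and $x_{n+1}=1$, this will give precisely the statement claimed.

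Concretely, I would argue as follows. First, Lemma \ref{mr=0} gives a bijection, call it $\Psi_0$, from $\tilde{\mathcal{L}}_{n,m,0}$ to $\mathcal{M}_{n,m,1}$. Next, Lemma \ref{mr>1} provides a bijection $\Psi:\mathcal{M}_{n,m,r}\to\mathcal{M}_{n,m,r+1}$ for every $1\leq r\leq m-2$. Composing these, the map $\Psi^{r-1}\circ\Psi_0$ is a bijection from $\tilde{\mathcal{L}}_{n,m,0}$ onto $\mathcal{M}_{n,m,r}$ for each $r$ in the range $1\leq r\leq m-1$. Therefore the cardinalities $|\mathcal{M}_{n,m,r}|$ are all equal to $|\tilde{\mathcal{L}}_{n,m,0}|$, and in particular independent of $r$.

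There is essentially no obstacle: all the work has been done in the two preceding lemmas, and the only thing to notice is that $\tilde{\mathcal{L}}_{n,m,0}$ admits the two equivalent descriptions recalled just before Lemma \ref{mr=0} (namely, as a subset of $\mathcal{L}_{n,m,0}$ or as a subset of $\mathcal{M}_{n,m,0}$), which is the observation that makes the endpoint of the chain of bijections match the set appearing in the statement of the theorem. The proof is thus a one-line combination of Lemmas \ref{mr>1} and \ref{mr=0}, entirely parallel to the earlier theorem on $NPL(P)$.
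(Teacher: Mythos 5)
Your proposal is correct and follows exactly the paper's own argument: the paper likewise proves the theorem by combining Lemmas \ref{mr>1} and \ref{mr=0}, and your composition $\Psi^{r-1}\circ\Psi_0$ merely makes the chaining explicit. Nothing further is needed.
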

\begin{proof} Combining Lemmas \ref{mr>1} and \ref{mr=0}, we
immediately obtain the results as desired.
\end{proof}
\section{The pointed $(n,m)$-lattice path}
In this section, we will consider the pointed $(n,m)$-lattice paths.
We will define two parameters for an pointed $(n,m)$-lattice path:
the pointed non-positive length and the pointed rightmost minimum
length. We will give generalizations of the results in \cite{SGM}.
We will prove the Chung-Feller theorems of the pointed
$(n,m)$-lattice path on these two parameters. First, we give the
definition of the pointed $(n,m)$-lattice paths as follows.

\begin{defn} A pointed $(n,m)$-lattice paths $\dot{P}$ is a pair
$[P;j]$ such that:

(1) $P=(x_1,y_1)(x_2,y_2)\ldots (x_{n+1},y_{n+1})$ is an
$(n,m)$-lattice paths;

(2) $0\leq j\leq x_{n+1}-1$.
\end{defn}
We call the point $(m-j,0)$ the root of $P$. We use
$\mathscr{L}_{n,m}$ to denote the set of the pointed $(n,m)$-lattice
paths.
\begin{lem}\label{numberpointed} The number of the pointed $(n,m)$-lattice
paths is ${2n\choose{n}}{m\choose n+1}$.
\end{lem}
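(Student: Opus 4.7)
My plan is to count the pointed paths directly by summing over their two ingredients. A pointed $(n,m)$-lattice path $[P;j]$ consists of an ordinary $(n,m)$-lattice path $P$ together with an integer $j$ satisfying $0\le j\le x_{n+1}-1$, so for each fixed $P$ there are exactly $x_{n+1}$ choices of $j$. Therefore
\[
|\mathscr{L}_{n,m}| \;=\; \sum_{P}x_{n+1}(P),
\]
where $P$ ranges over all $(n,m)$-lattice paths. Since the sequence $(y_1,\ldots,y_{n+1})$ and the sequence $(x_1,\ldots,x_{n+1})$ in the definition of an $(n,m)$-lattice path are constrained independently, the sum factorizes as
\[
|\mathscr{L}_{n,m}| \;=\; A_n \cdot B_{n,m},
\]
where $A_n$ counts the admissible $y$-sequences and $B_{n,m}=\sum_{x} x_{n+1}$ is the sum of last coordinates over admissible $x$-sequences.

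The first step is to compute $A_n$. I substitute $z_i=1-y_i$; the constraints $1-n\le y_i\le 1$ and $\sum y_i=1$ become $0\le z_i\le n$ and $\sum_{i=1}^{n+1}z_i=n$. The upper bound $z_i\le n$ is automatic, so I am simply counting weak compositions of $n$ into $n+1$ nonnegative parts, giving $A_n=\binom{2n}{n}$.

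The second step is to compute $B_{n,m}$. The admissible $x$-sequences are compositions of $m$ into $n+1$ positive parts, numbering $\binom{m-1}{n}$ in total. By the symmetry of such compositions, each coordinate $x_i$ has the same average value $m/(n+1)$, so
\[
B_{n,m} \;=\; \frac{m}{n+1}\binom{m-1}{n} \;=\; \binom{m}{n+1}.
\]
(Alternatively, conditioning on $x_{n+1}=k$ gives $B_{n,m}=\sum_{k=1}^{m-n}k\binom{m-k-1}{n-1}$, which sums to the same value by a standard hockey-stick manipulation.) Multiplying the two factors yields $|\mathscr{L}_{n,m}|=\binom{2n}{n}\binom{m}{n+1}$.

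The only step that requires any care is verifying that the $y$-sequences are counted by $\binom{2n}{n}$; everything else is bookkeeping. I do not expect this to be a real obstacle, since the substitution $z_i=1-y_i$ immediately reduces it to a standard stars-and-bars count, and the bound $z_i\le n$ is vacuous once $\sum z_i=n$ and $z_i\ge 0$.
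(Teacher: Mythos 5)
Your proof is correct and takes essentially the same route as the paper: both factor the count as (number of admissible $y$-sequences) $\times$ (number of pairs $(x_1,\ldots,x_{n+1};j)$), and both obtain the first factor $\binom{2n}{n}$ by a stars-and-bars count of the $y$-sequences. The only difference lies in the second factor, where the paper uses the explicit bijection $(x_1,\ldots,x_{n+1};j)\mapsto(x_1,\ldots,x_n,x_{n+1}-j,j)$ onto compositions of $m$ into $n+1$ positive parts plus one nonnegative part, whereas you evaluate $\sum_P x_{n+1}$ by the symmetry of compositions; both correctly give $\binom{m}{n+1}$.
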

\begin{proof} Note that the number of the solutions
of the equation $\sum\limits_{i=1}^{n+1}y_i=1$ such that $1-n\leq
y_i\leq 1$ is ${2n\choose n}$. On the other hand, we let $z_i=x_i$
for all $i\in [n]$, $z_{n+1}=x_{n+1}-j$ and $z_{n+2}=j$. Since
$\sum\limits_{i=1}^{n+1}x_i=m$, $x_i\geq 1$ and $0\leq j\leq
x_{n+1}-1$, we have $\sum\limits_{i=1}^{n+2}z_i=m$, $z_i\geq 1$ for
all $i\in[n+1]$ and $z_{n+2}\geq 0$. It is easy to see that the
number of the solutions of the equation
$\sum\limits_{i=1}^{n+2}z_i=m$ such that $z_i\geq 1$ for all $i\in
[n+1]$ and $z_{n+2}\geq 0$ is ${m\choose n+1}$. Hence, the number of
the pointed $(n,m)$-lattice paths is ${2n\choose{n}}{m\choose n+1}$.
\end{proof}
\subsection{The pointed non-positive length of an pointed $(n,m)$-lattice paths}
Given a pointed $(n,m)$-lattice path $\dot{P}=[P;j]$, where
$P=(x_1,y_1)(x_2,y_2)\ldots (x_{n+1},y_{n+1})$ and $0\leq j\leq
x_{n+1}-1$, we let $PNPL(\dot{P})=NPL(P)+j$. Clearly, $0\leq
PNPL(\dot{P})\leq m-1$. We say that $PNPL(\dot{P})$ is the {\it
pointed non-positive length} of the path $\dot{P}$.

By Lemma \ref{enumeration} (1), we have the following lemma.
\begin{lem} The number of the pointed $(n,m)$-lattice paths  with pointed non-positive
length $0$ is ${m-1\choose n}c_n$.
\end{lem}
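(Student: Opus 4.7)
The plan is very short because the quantity $PNPL(\dot{P}) = NPL(P) + j$ is a sum of two non-negative integers. First I would note that since $j \geq 0$ and $NPL(P) \geq 0$ for every $(n,m)$-lattice path $P$, the equation $PNPL(\dot{P}) = 0$ forces simultaneously $NPL(P) = 0$ and $j = 0$. This reduces counting pointed lattice paths with pointed non-positive length $0$ to counting ordinary $(n,m)$-lattice paths with non-positive length $0$, since the root is forced to be $j = 0$ (which is automatically a legal choice because $0 \leq 0 \leq x_{n+1}-1$).

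Next I would invoke Lemma \ref{enumeration}(1), which already gives the count ${m-1\choose n}c_n$ for the set of $(n,m)$-lattice paths $P$ with $NPL(P) = 0$. Pairing each such $P$ with its unique admissible root $j = 0$ establishes a bijection between $\{\dot{P}\in\mathscr{L}_{n,m} : PNPL(\dot{P}) = 0\}$ and $\mathcal{L}_{n,m,0}$, and the claim follows immediately.

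There is no significant obstacle here: the only thing to check is that the definition of $PNPL$ makes the two summands $NPL(P)$ and $j$ independently non-negative, which is built into the definition of a pointed $(n,m)$-lattice path. The result is essentially a one-line corollary of Lemma \ref{enumeration}(1), and the content of the statement is that the pointed structure collapses to the unpointed case exactly when $PNPL = 0$.
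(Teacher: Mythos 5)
Your proof is correct and matches the paper's (implicit) argument exactly: the paper simply states that the lemma follows from Lemma \ref{enumeration}(1), and the justification is precisely your observation that $PNPL(\dot{P})=NPL(P)+j=0$ forces $NPL(P)=0$ and $j=0$, collapsing the pointed count to the unpointed one.
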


Given an $(n,m)$-lattice path $P=(x_1,y_1)(x_2,y_2)\ldots
(x_{n+1},y_{n+1})$, we let
$$P_i=(x_{i+1},y_{i+1})\ldots(x_{n+1},y_{n+1})(x_1,y_1)\ldots
(x_{i},y_{i}). $$  $P_i$ is call the {\it $i$th cyclic permutation}
of $P$. Furthermore, setting the point $(m-j,0)$ to be the root of
$P_i$, where $0\leq j\leq x_i-1$, we get a pointed $(n,m)$-lattice
paths $[P_i;j]$, denoted by $\dot{P}(i;j)$. Finally, we define a set
$\mathcal{PL}(P)$ as follows:
$$\mathcal{PL}(P)=\{\dot{P}(i;j)\mid i\in[n+1]\text{ and }0\leq j\leq x_i-1\}.$$
Clearly, we have the following lemma.
\begin{lem} $|\mathcal{PL}(P)|=m$.
\end{lem}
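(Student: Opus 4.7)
My plan is to view $\mathcal{PL}(P)$ as the image of the index set
\[
I = \{(i,j) \,:\, i \in [n+1],\ 0 \le j \le x_i - 1\}
\]
under the map $(i,j) \mapsto \dot P(i;j)$. Since $|I| = \sum_{i=1}^{n+1} x_i = m$ by defining condition (2) of an $(n,m)$-lattice path, the lemma reduces to showing that this parametrization is injective.

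Two pointed paths $[P_i;j]$ and $[P_{i'};j']$ are equal only if the step sequences $P_i$ and $P_{i'}$ coincide and $j=j'$, so injectivity in turn reduces to showing that the $n+1$ cyclic rotations $P_1,\ldots,P_{n+1}$ are pairwise distinct as ordered sequences of steps. This rigidity of rotations is the only substantive part of the argument.

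To prove the rigidity, I would argue by contradiction. Suppose $P_i = P_{i'}$ for some $1 \le i < i' \le n+1$. Then the cyclic sequence $((x_k, y_k))_{k \in \mathbb{Z}/(n+1)}$ is fixed by the shift of size $i' - i$, so it has a minimal period $p$ satisfying $p \mid (n+1)$ and $p \le i' - i < n+1$. Setting $q := (n+1)/p \ge 2$, periodicity gives
\[
1 \;=\; \sum_{k=1}^{n+1} y_k \;=\; q \sum_{k=1}^{p} y_k,
\]
where the first equality is condition (1). This forces $q \mid 1$, contradicting $q \ge 2$. Therefore the cyclic rotations are distinct, the parametrization is injective, and $|\mathcal{PL}(P)| = \sum_{i=1}^{n+1} x_i = m$. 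The main (and really only) obstacle is this divisibility step, which exploits the fact that the total $y$-displacement equals one and is therefore coprime to every proper period of a cyclic decomposition of the step sequence.
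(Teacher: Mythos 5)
Your proof is correct. The paper offers no proof at all (the lemma is prefaced only by ``Clearly''), and the intended argument is exactly your first step, $|\mathcal{PL}(P)|=\sum_{i=1}^{n+1}x_i=m$; your injectivity check --- that the cyclic rotations $P_1,\ldots,P_{n+1}$ are pairwise distinct because a nontrivial period would force $1=\sum_{k=1}^{n+1}y_k=q\sum_{k=1}^{p}y_k$ with $q\geq 2$ --- supplies the one genuinely non-obvious point that the paper's ``Clearly'' sweeps under the rug, and it is carried out correctly.
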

Recall that $<_P$ is the linear order on the set $[n+1]$. We define
a linear order $\prec_P$ on the set $\mathcal{PL}(P)$ by the
following rules:

for any $\dot{P}(i_1;j_1),\dot{P}(i_2;j_2)\in\mathcal{PL}(P)$,
$\dot{P}(i_1;j_1)\prec_P\dot{P}(i_2;j_2)$ if either (1) $i_1<_Pi_2$
or (2) $i_1=i_2$ and $j_1<j_2$.

The sequence, which is formed by the elements in the set
$\mathcal{PL}(P)$ in the increasing order with respect to $\prec_P$,
reduce a bijection from the sets  $[m]$ to $\mathcal{PL}(P)$,
denoted by $\Theta=\Theta_P$.
\begin{exa}\label{pointeddyck} Let $n=3$ and $m=5$. Let $P=(1,1)(1,-2)(1,1)(2,1)$. We draw the pointed $(3,5)$-lattice path
$\dot{P}=[P;1]$ as follows.
\begin{center}
\includegraphics[width=4cm]{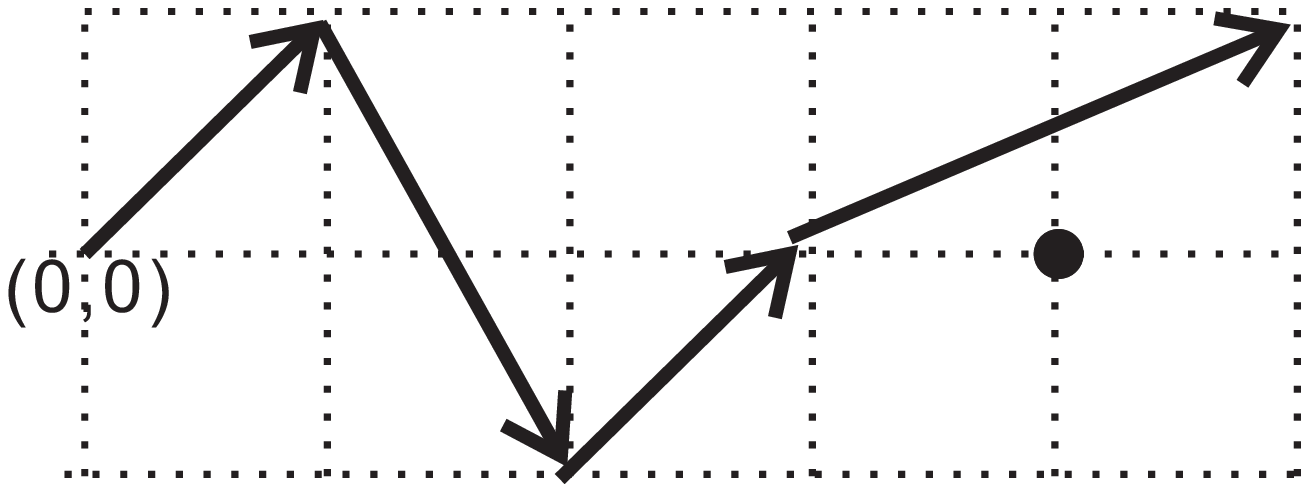}
\end{center}where the root is the point $(4,0)$ denoted by $\bullet$. Then
$PNPL(\dot{P})=3$. We write the bijection $\Theta_P$ as the
following $2\times 5$ matrix.
$$\Theta_P=\left(\begin{array}{ccccc}1&2&3&4&5\\
\dot{P}(2;0)&\dot{P}(3;0)&\dot{P}(4;0)&\dot{P}(4;1)&\dot{P}(1;0)\end{array}\right)$$
\end{exa}
\begin{thm}\label{gen} Let $P$ be an $(n,m)$-lattice path, $\mathcal{PL}(P)$
and $\Theta_P$ defined as above. Then
$$PNPL(\Theta(r))=r-1$$ for any $r\in [m]$.
\end{thm}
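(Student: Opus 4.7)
My plan is to reduce Theorem \ref{gen} to the single identity
\[
NPL(P_i) \;=\; \sum_{l:\, l <_P i} x_l
\]
for every $i \in [n+1]$. Granting it, the theorem follows from the bookkeeping built into $\Theta_P$: by definition of $\prec_P$, the pointed paths in $\mathcal{PL}(P)$ are listed in blocks indexed by $i \in [n+1]$ in $<_P$-order, and within block $i$ the $x_i$ elements appear as $\dot P(i;0),\ldots,\dot P(i;x_i-1)$. Hence if $\Theta_P(r) = \dot P(i;j)$, then counting predecessors yields $r - 1 = \sum_{l <_P i} x_l + j$, and the identity rewrites this as $NPL(P_i) + j = PNPL(\dot P(i;j))$, which is precisely the content of the theorem.

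To prove the identity I would express the partial $y$-sums of the rotated path $P_i$ in terms of the partial sums $S_k = \sum_{l=1}^{k} y_l$ of $P$. Let $\sigma(p) \in [n+1]$ denote the index in $P$ corresponding to position $p$ in $P_i$, so $\sigma(p) = i + p$ before wrap-around and $\sigma(p) = i + p - (n+1)$ after. Splitting the sum defining the height $T_p$ of $P_i$ after $p$ steps at the wrap-around point gives
\[
T_p \;=\; \begin{cases} S_{\sigma(p)} - S_i, & \sigma(p) > i, \\ 1 + S_{\sigma(p)} - S_i, & \sigma(p) \le i. \end{cases}
\]
Therefore $T_p \le 0$ exactly when either $\sigma(p) > i$ and $S_{\sigma(p)} \le S_i$, or $\sigma(p) < i$ and $S_{\sigma(p)} < S_i$; the diagonal case $\sigma(p) = i$ is automatically excluded since it would demand $S_i \le S_i - 1$. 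Because $\sigma$ is a bijection of $[n+1]$, summing $x_{\sigma(p)}$ over $p \in NP(P_i)$ equals the sum of $x_j$ over those indices $j \in [n+1]$ that satisfy one of the two clauses above.

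The main bookkeeping obstacle is matching this union of clauses with the definition of $<_P$, whose tie-breaking rule prefers larger indices when the $S$-values agree. A short case analysis on the four sign configurations of $(S_j - S_i,\, j - i)$ confirms the match: strict inequality $S_j < S_i$ is caught whether $j > i$ or $j < i$; equality $S_j = S_i$ with $j > i$ is caught by the first clause; and equality $S_j = S_i$ with $j < i$ is correctly not caught — exactly the prescription of $<_P$. Once this matching is verified, the key identity is established, and combined with the reduction above it proves Theorem \ref{gen}.
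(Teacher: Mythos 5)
Your proposal is correct, and it pivots on exactly the same fact as the paper's argument: the identity $NPL(P_i)=\sum_{l<_P i}x_l$, equivalently $PNPL(\dot{P}(s;t))=\sum_{j=1}^{k-1}x_{\pi_P(j)}+t$ with $k=\pi_P^{-1}(s)$, which the paper asserts in a single line and then exploits through an increment-plus-range argument. The difference is that you actually prove this identity, by writing the partial sums of the cyclic shift $P_i$ in terms of the partial sums $S_k$ of $P$ and matching the two resulting clauses against the tie-breaking rule in $<_P$; your case analysis is right, including the exclusion of $j=i$ and of ties with $j<i$, and your replacement of the paper's induction on $r$ by a direct count of $\prec_P$-predecessors of $\dot{P}(i;j)$ is a clean simplification. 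So the two proofs are logically the same argument, but yours supplies the one step the paper leaves unverified.
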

\begin{proof} Note that $0\leq PNPL(\Theta(r))\leq m-1$
for any $r\in[m]$. It is sufficient to prove that
$PNPL(\Theta(r+1))=PNPL(\Theta(r))+1$ for any $r\in [m-2]$. Suppose
$$P=(x_1,y_1)(x_2,y_2)\ldots (x_{n+1},y_{n+1})$$ and
$\Theta(r)=\dot{P}(s;t)\in\mathcal{PL}(P)$. Let $\pi_P$ be the
sequence formed by writing $[n+1]$ in the increasing order with
respect to $<_P$  and $\pi_P^{-1}(s)=k$. Then
$PNPL(\Theta(r))=\sum\limits_{j=1}^{k-1}x_{\pi_P(j)}+t$. Now,
suppose $\Theta(r+1)=\dot{P}(\tilde{s};\tilde{t})$. We discuss the
following two cases:

{\it Case I.} $s=\tilde{s}$

Then $\tilde{t}=t+1$. This implies
$PNPL(\Theta(r+1))=PNPL(\Theta(r))+1$.

{\it Case II.} $s<_P \tilde{s}$

Then $\pi_P(k+1)=\tilde{s}$, $t=x_s-1$ and $\tilde{t}=0$. Thus,
$$PNPL(\Theta(r+1))=\sum\limits_{j=1}^{k}x_{\pi_P(j)}=\sum\limits_{j=1}^{k-1}x_{\pi_P(j)}+x_s=PNPL(\Theta(r))+1.$$
This complete the proof.\end{proof}
\begin{exa} We consider the path $P$ in Example \ref{pointeddyck}. We draw the
pointed lattice path $\Theta(r)$ as follows:
\begin{center}
\includegraphics[width=12cm]{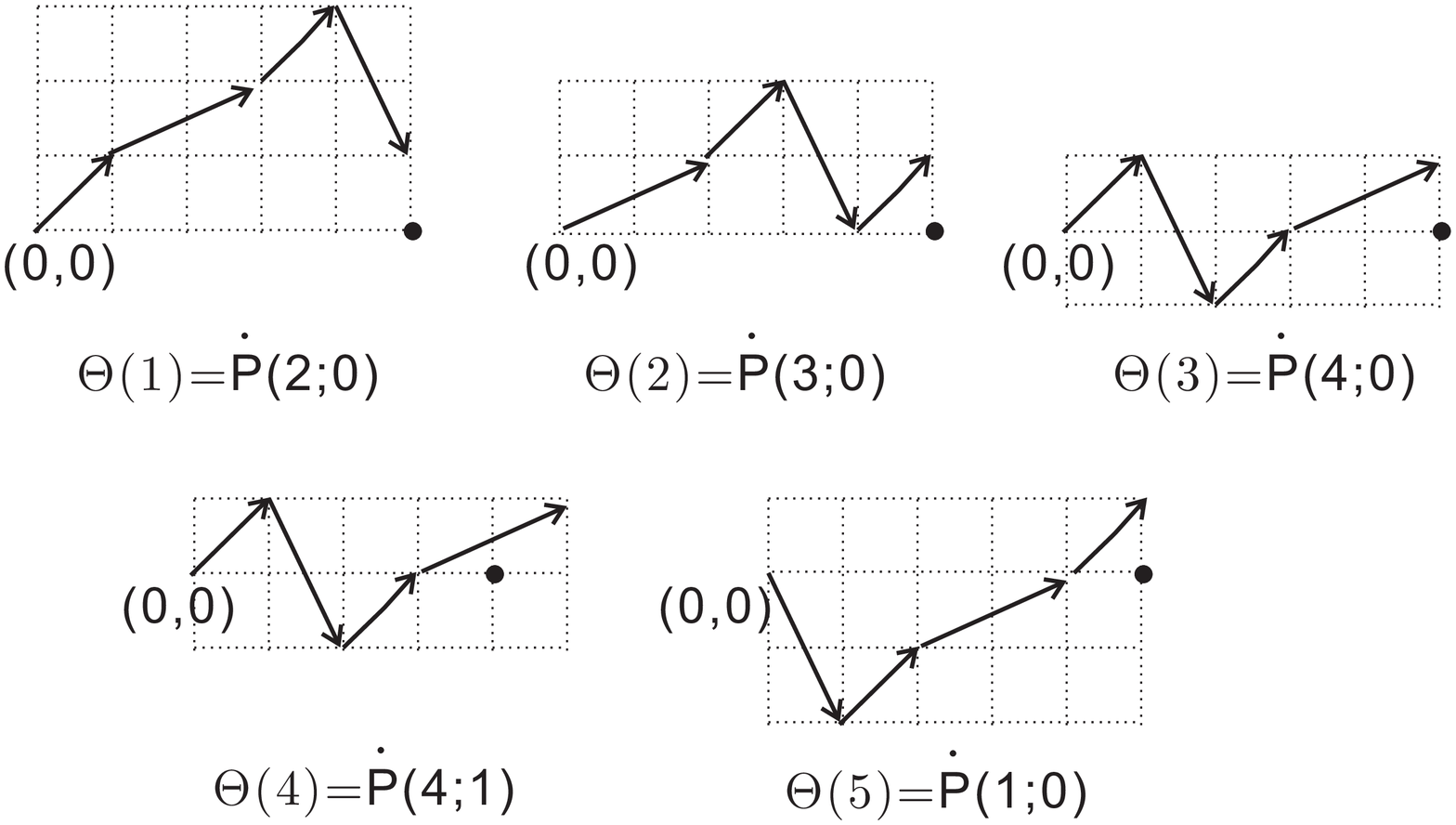}
\end{center}
\end{exa}
\begin{rem} Let $\dot{P}=[P;j]$ be a pointed $(n,m)$-lattice path, where $P=(x_1,y_1)\ldots
(x_{n+1},y_{n+1})$ and $0\leq j\leq x_{n+1}-1$. Setting $m=n+1$, we
have $x_i=1$ for all $i$ and $j=0$. Let $Y=(y_1,\ldots,y_{n+1})$.
Then ${E}(Y)=PNPL(\dot{P})$. This tells us that Lemma
\ref{bookchung} can be viewed as a corollary of Theorem \ref{gen}.
\end{rem}

We use $\mathscr{L}_{n,m,r}$ to denote the set of the pointed
$(n,m)$-lattice paths with pointed non-positive length $r$. Clearly,
$\mathscr{L}_{n,m}=\bigcup\limits_{r=0}^{m-1}\mathscr{L}_{n,m,r}$.
Let $l_{n,m,r}=|\mathscr{L}_{n,m,r}|$.

\begin{cor}\label{chungdyck} For any $0\leq r\leq m-1$, the number of the pointed $(n,m)$-lattice paths with pointed non-positive length
$r$ is equal to the number of the pointed $(n,m)$-lattice paths with
pointed non-positive length $0$ and independent on $r$, i.e.,
$l_{n,m,r}=\frac{1}{m}{2n\choose n}{m\choose{n+1}}$.
\end{cor}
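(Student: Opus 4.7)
The plan is to combine Theorem \ref{gen} with Lemma \ref{numberpointed} via a partition argument. The key observation is that the sets $\mathcal{PL}(P)$ depend only on the cyclic equivalence class of the underlying $(n,m)$-lattice path: for any $i \in [n+1]$ and any $j$ with $0 \le j \le x_i - 1$, the pointed paths obtainable from $P_i$ by cyclic rotation and re-rooting are exactly those obtainable from $P$, so $\mathcal{PL}(P_i) = \mathcal{PL}(P)$ for every $i$. Hence the family $\{\mathcal{PL}(P) : P \text{ is an } (n,m)\text{-lattice path}\}$ forms a partition of $\mathscr{L}_{n,m}$.

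Next I would invoke the lemma stating $|\mathcal{PL}(P)| = m$, so every block of this partition has exactly $m$ elements. The heart of the argument is then Theorem \ref{gen}: the bijection $\Theta_P : [m] \to \mathcal{PL}(P)$ satisfies $PNPL(\Theta_P(r)) = r - 1$, which means that within each block $\mathcal{PL}(P)$ the pointed non-positive length takes each value in $\{0, 1, \ldots, m-1\}$ exactly once. Consequently, for every fixed $r$ with $0 \le r \le m-1$, the set $\mathscr{L}_{n,m,r}$ is in bijection with the set of blocks of the partition, so $l_{n,m,r}$ is simply the number of blocks, which is the same for all $r$.

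Finally, counting gives the explicit formula: by Lemma \ref{numberpointed}, $|\mathscr{L}_{n,m}| = \binom{2n}{n}\binom{m}{n+1}$, and since the partition has blocks of uniform size $m$, the number of blocks equals $\frac{1}{m}\binom{2n}{n}\binom{m}{n+1}$. Therefore $l_{n,m,r} = \frac{1}{m}\binom{2n}{n}\binom{m}{n+1}$ for every $r$, as claimed.

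The only genuinely delicate point, and what I would state explicitly rather than leaving implicit, is that $\{\mathcal{PL}(P)\}$ really is a partition — that is, that two such sets are either identical or disjoint. This follows because membership in a common $\mathcal{PL}(P)$ is precisely the equivalence relation generated by cyclic rotation of the underlying unpointed path (the root position is absorbed into the parameter $j$). All remaining steps are bookkeeping: the substantive content has already been packed into Theorem \ref{gen} and Lemma \ref{numberpointed}.
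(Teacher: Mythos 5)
Your proposal is correct and follows essentially the same route as the paper: the paper partitions $\mathscr{L}_{n,m}$ into equivalence classes under cyclic permutation of the underlying path (its class $EQ(\dot{P})$ coincides with your block $\mathcal{PL}(P)$), notes each class has $m$ elements, and applies Theorem \ref{gen} to see each class meets every $\mathscr{L}_{n,m,r}$ exactly once, then divides the total from Lemma \ref{numberpointed} by $m$. Your explicit attention to why the blocks are pairwise equal or disjoint is a point the paper passes over quickly, but it is the same argument.
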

\begin{proof} First, we define an equivalent relation on the set
$\mathscr{L}_{n,m}$. Let $\dot{P}=[P;i]$ and $\dot{Q}=[Q;j]$ be two
pointed $(n,m)$-lattice paths. Suppose $P=(x_1,y_1)\ldots
(x_{n+1},y_{n+1})$. Recall $P_k$ denote the $k$th cyclic permutation
of $P$, i.e.,
$P_k=(x_{k+1},y_{k+1})\ldots(x_{n+1},y_{n+1})(x_1,y_1)\ldots
(x_{k},y_{k}). $ We say $\dot{Q}$ and $\dot{P}$ is equivalent,
denoted by $\dot{Q}\sim\dot{P}$, if $Q=P_k$ for some $k\in[n+1]$.
Hence, given a pointed lattice path $\dot{P}\in\mathscr{L}_{n,m}$,
we define a set $EQ(\dot{P})$ as
$EQ(\dot{P})=\{\dot{Q}\in\mathscr{L}_{n,m}\mid
\dot{Q}\sim\dot{P}\}$. We say that the set $EQ(\dot{P})$ is an
equivalent class of the set $\mathscr{L}_{n,m}$. Clearly,
$|EQ(\dot{P})|=m$. Now, we may suppose that the set
$\mathscr{L}_{n,m}$ has $t$ equivalent class. Then
$t=\frac{1}{m}{2n\choose n}{m\choose{n+1}}$. For any $0\leq r\leq
m-1$, from Theorem \ref{gen}, every equivalent class contains
exactly one element with pointed non-positive length $r$. Hence,
$l_{n,m,r}=t=\frac{1}{m}{2n\choose n}{m\choose{n+1}}$.
\end{proof}

\subsection{The pointed rightmost minimum length of an pointed $(n,m)$-lattice paths}
Let $\dot{P}=[P;j]$ be a pointed $(n,m)$-lattice path, where
$P=(x_1,y_1)(x_2,y_2)\ldots (x_{n+1},y_{n+1})$ is a $(n,m)$-lattice
path and $0\leq j\leq x_{n+1}-1$. Recall that $RML(P)$ is the
rightmost minimum length of $P$. We let $PRML(\dot{P})=RML(P)+j$ and
call $PRML(\dot{P})$ the {\it pointed rightmost minimum length} of
$\dot{P}$.

Note that $PNPL(P)=0$ if and only if $PRML(P)=0$ for any pointed
$(n,m)$-lattice path. We immediately obtain the following lemma.

\begin{lem} The number of the pointed $(n,m)$-lattice paths  with pointed rightmost minimum length $0$ is ${m-1\choose n}c_n$.
\end{lem}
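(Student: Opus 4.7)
The plan is to reduce this count to the one already handled in the lemma immediately preceding the statement. The key is the equivalence recorded just above: for every pointed $(n,m)$-lattice path $\dot{P}$, $PNPL(\dot{P}) = 0$ if and only if $PRML(\dot{P}) = 0$. Once this equivalence is in hand, the set of pointed paths counted here is literally the set counted in the preceding lemma, so the value ${m-1\choose n}c_n$ is read off directly.

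To justify the equivalence, I would first unpack both conditions to a common form involving $P$ and the pointing index $j$. Writing $\dot{P} = [P;j]$ with $P = (x_1,y_1)\ldots(x_{n+1},y_{n+1})$, the definitions give $PNPL(\dot{P}) = NPL(P) + j$ and $PRML(\dot{P}) = RML(P) + j$, where $j \geq 0$ and both $NPL(P), RML(P) \geq 0$. Therefore each of the two conditions $PNPL(\dot{P}) = 0$ and $PRML(\dot{P}) = 0$ forces $j = 0$ together with its corresponding path-level condition $NPL(P) = 0$ or $RML(P) = 0$.

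Next I would verify that $NPL(P) = 0$ iff $RML(P) = 0$ for every $(n,m)$-lattice path $P$. Since $x_i \geq 1$ for all $i$, the identity $NPL(P) = \sum_{i\in NP(P)} x_i = 0$ is equivalent to $NP(P) = \emptyset$, i.e. to the condition $\sum_{k=1}^{i} y_k \geq 1$ for every $i \in [n+1]$. On the other hand, with $a_0 = 0$ and $a_i = \sum_{k=1}^i y_k$, the condition $RML(P) = 0$ says that the rightmost minimum point of the polygonal path through the $(b_i,a_i)$ is the origin, which happens precisely when $a_i \geq 1$ for every $i \geq 1$. The two descriptions coincide, giving the equivalence.

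Combining these two steps yields $PNPL(\dot{P}) = 0 \iff PRML(\dot{P}) = 0$, so the set of pointed $(n,m)$-lattice paths with $PRML = 0$ is identical to the set with $PNPL = 0$, whose cardinality is ${m-1\choose n}c_n$ by the preceding lemma. There is no real obstacle here; the argument is essentially bookkeeping around the observation recorded immediately before the statement, and if anything the only point requiring a moment's care is the equivalence $NPL(P) = 0 \iff RML(P) = 0$, which follows directly from the definitions together with $x_i \geq 1$.
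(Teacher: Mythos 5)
Your proof is correct and follows the same route as the paper: the paper simply notes that $PNPL(\dot{P})=0$ if and only if $PRML(\dot{P})=0$ and cites the preceding lemma, while you additionally spell out the easy verification of that equivalence from the definitions. No issues.
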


First, given a $(n,m)$-lattice path $P$, we recall that $\pi_P$ is
the sequence formed by writing $[n+1]$ in the increasing order with
respect to $<_P$. Suppose $\pi_P(1)=i$. Let
$\sigma_P=(\sigma_P(1),\sigma_P(2),\ldots,\sigma_P(n+1))=(i,i-1,\ldots,1,n+1,n,\ldots,i+1)$.

Using $\sigma_P$, we define a new linear order $\prec^*_P$ on the
set $\mathcal{PL}(P)=\{\dot{P}(i;j)\mid i\in[n+1]\text{ and }0\leq
j\leq x_i-1\}$ by the following rules:

for any $\dot{P}(i_1;j_1),\dot{P}(i_2;j_2)\in\mathcal{PL}(P)$,
$\dot{P}(i_1;j_1)\prec^*_P\dot{P}(i_2;j_2)$ if either (1)
$\sigma^{-1}_P(i_1)<\sigma^{-1}_P(i_2)$ or (2) $i_1=i_2$ and
$j_1<j_2$.

The sequence, which is formed by the elements in the set
$\mathcal{PL}(P)$ in the increasing order with respect to
$\prec^*_P$, reduce a bijection from the sets $[m]$ to
$\mathcal{PL}(P)$, denoted by $\Gamma=\Gamma_P$.
\begin{exa}  Consider the path $P$ and the pointed path $\dot{P}$ in Example \ref{pointeddyck}.
we have $PRML(\dot{P})=3$. It is easy to see $\sigma_P=(2,1,4,3)$.
We write the bijection $\Gamma_P$ as the following $2\times 5$
matrix.
$$\Gamma_P=\left(\begin{array}{ccccc}1&2&3&4&5\\
\dot{P}(2;0)&\dot{P}(1;0)&\dot{P}(4;0)&\dot{P}(4;1)&\dot{P}(3;0)\end{array}\right)$$
\end{exa}
\begin{thm}\label{mgen} Let $P$ be an $(n,m)$-lattice path and
$\Gamma$ defined as above. Then
$$PRML(\Gamma(r))=r-1$$ for any $r\in [m]$.
\end{thm}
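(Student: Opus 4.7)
The plan mirrors the proof of Theorem~\ref{gen}. Since $0\le PRML(\dot{Q})\le m-1$ for every pointed $(n,m)$-lattice path $\dot{Q}$, it suffices to prove $PRML(\Gamma(1))=0$ and $PRML(\Gamma(r+1))=PRML(\Gamma(r))+1$ for each $r\in[m-1]$.

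First let $i_0:=\sigma_P(1)=\pi_P(1)$. Unwinding the definition of $<_P$, the element $i_0$ is the index whose partial sum $a_{i_0}$ is minimal and, among all indices realizing this minimum, the largest; equivalently, $(b_{i_0},a_{i_0})$ is the rightmost minimum point of $P$. Rotating $P$ so that step $i_0$ becomes the last step places this minimum at the origin of $P_{i_0}$, where it then becomes the unique minimum. Hence $RML(P_{i_0})=0$, and since $\Gamma(1)=\dot{P}(i_0;0)$ we obtain $PRML(\Gamma(1))=0$.

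For the inductive step, write $\Gamma(r)=\dot{P}(s_k;t)$ with $s_k=\sigma_P(k)$. If $t<x_{s_k}-1$, then $\Gamma(r+1)=\dot{P}(s_k;t+1)$ and the increment is immediate, since $P_{s_k}$ is unchanged. Otherwise $t=x_{s_k}-1$ and $\Gamma(r+1)=\dot{P}(s_{k+1};0)$, so what remains is the identity $RML(P_{s_{k+1}})=RML(P_{s_k})+x_{s_k}$. The heart of the argument is the closed formula
\[
RML(P_s)=\begin{cases} b_{i_0}-b_s, & \text{if } s\le i_0,\\ m+b_{i_0}-b_s, & \text{if } s>i_0, \end{cases}
\]
which I would prove by splitting $P_s$ into a ``first segment'' (steps originally indexed $s+1,\ldots,n+1$) and a ``second segment'' (steps $1,\ldots,s$), expressing their partial sums as $a_{s+l}-a_s$ and $a_{l-n-1+s}-a_s+1$ respectively, and invoking $a_l\ge a_{i_0}$ for all $l$ (with strict inequality when $l>i_0$) to locate the rightmost minimum of $P_s$ as the image of the index $i_0$ of $P$. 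Given the explicit shape $\sigma_P=(i_0,i_0-1,\ldots,1,n+1,n,\ldots,i_0+1)$, the desired increment reduces to $b_{s_k}-b_{s_{k+1}}=x_{s_k}$ in the generic case where $s_k$ and $s_{k+1}$ lie on the same side of $i_0$, and to a short check using $b_1=x_1$ and $b_{n+1}=m$ at the single wrap-around step $k=i_0$ (where $s_k=1$ and $s_{k+1}=n+1$).

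The main obstacle will be establishing the closed formula when $s>i_0$: if some index $l>s$ in $P$ achieves $a_l=a_{i_0}+1$, then both segments of $P_s$ host minimum-height points, and one must argue that the second-segment occurrence (the image of $i_0$) still lies rightmost along $P_s$. Combined with the strict inequality $a_l>a_{i_0}$ for $l>i_0$, the fact that the second segment of $P_s$ lies entirely to the right of the first resolves this tie, and it is precisely the rightmost-minimum property of $i_0$ in $P$ that is used here.
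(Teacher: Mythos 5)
Your proof is correct and takes essentially the same route as the paper's: both argue that $PRML$ increases by exactly $1$ between consecutive elements of the order $\prec^*_P$, reducing everything to the identity $RML(P_s)=\sum_{j=1}^{k}x_{\sigma_P(j)}$ for $s=\sigma_P(k+1)$, which is precisely your case formula $b_{i_0}-b_s$ (resp.\ $m+b_{i_0}-b_s$). The only difference is that the paper asserts this identity and the base case $PRML(\Gamma(1))=0$ without justification, whereas you actually prove them, correctly identifying and resolving the one delicate point (the possible tie between the two segments of $P_s$ when $s>i_0$).
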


\begin{proof} It is sufficient to prove that
$PRML(\Gamma(r+1))=PRML(\Gamma(r))+1$. Suppose
$\Gamma(r)=\dot{P}(i_1;j_1)$ and $\Gamma(r+1)=\dot{P}(i_2;j_2)$. If
$i_1=i_2$, then $j_1+1=j_2$. Clearly,
$PRML(\Gamma(r+1))=PRML(\Gamma(r))+1$. We consider the case with
$\sigma^{-1}_P(i_1)<\sigma^{-1}_P(i_2)$. Let $k=\sigma^{-1}_P(i_1)$.
Then  $\sigma^{-1}_P(i_2)=k+1$, $j_1=x_{i_1}-1$ and $j_2=0$. We have
$PRML(\dot{P}(i_2;j_2))=\sum\limits_{j=1}^{k}x_{\sigma_P(j)}=\sum\limits_{j=1}^{k-1}x_{\sigma_P(j)}+x_{i_1}=PRML(\dot{P}(i_1;j_1))+1$.
\end{proof}
\begin{exa}  We consider the path $P$ in Example \ref{pointeddyck}. We draw the
pointed lattice path $\Gamma(r)$ as follows:
\begin{center}
\includegraphics[width=12cm]{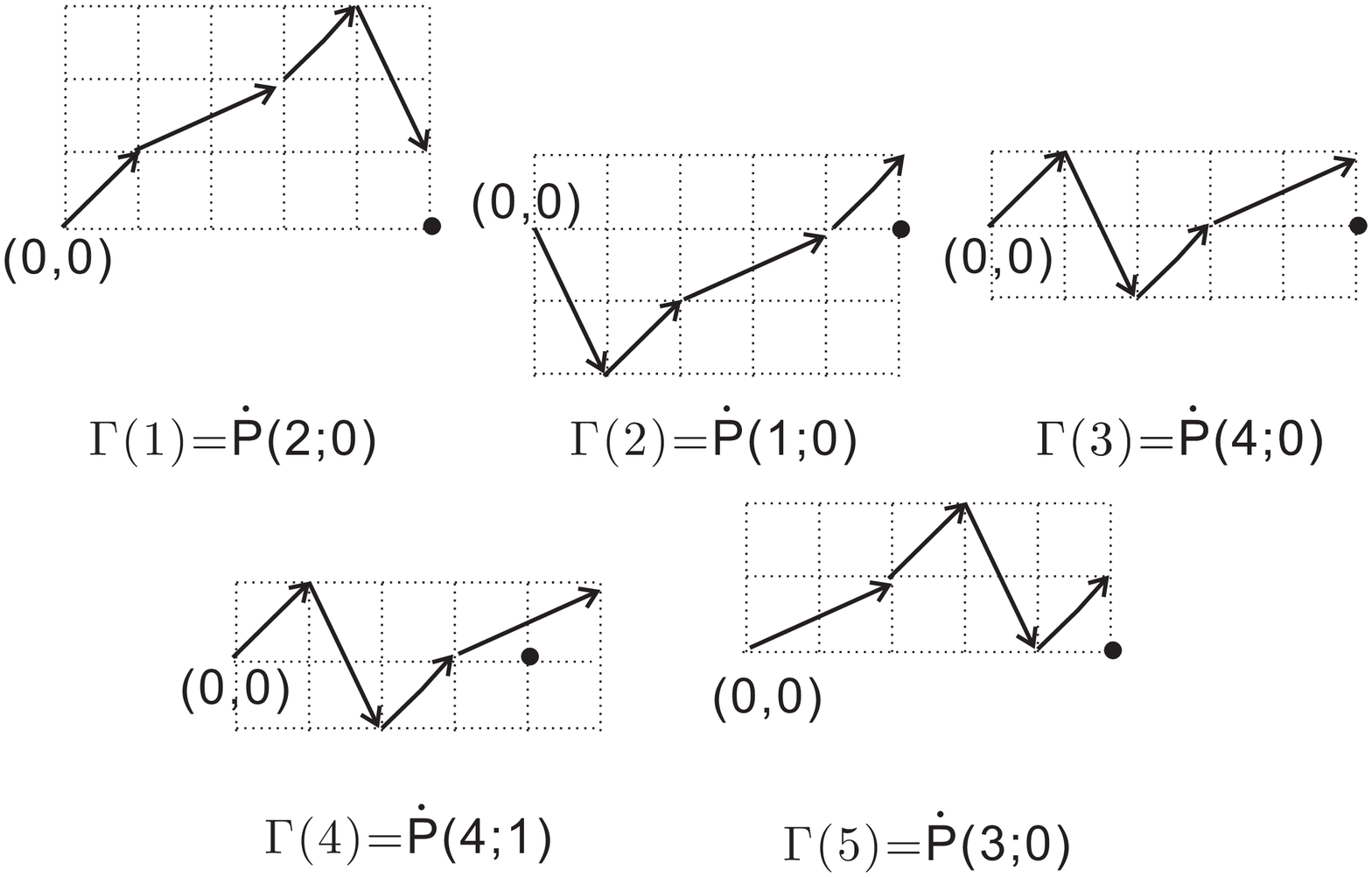}
\end{center}
\end{exa}

We use $\mathscr{M}_{n,m,r}$ to denote the set of the pointed
$(n,m)$-lattice paths with pointed rightmost minimum length $r$.
Clearly,
$\mathscr{L}_{n,m}=\bigcup\limits_{r=0}^{m-1}\mathscr{M}_{n,m,r}$.
Let $d_{n,m,r}=|\mathscr{M}_{n,m,r}|$.

\begin{cor} For any $0\leq r\leq m-1$, the number of the pointed $(n,m)$-lattice paths with pointed rightmost minimum length
$r$ is equal to the number of the pointed $(n,m)$-lattice paths with
pointed rightmost minimum length $0$ and independent on $r$, i.e.,
$d_{n,m,r}=\frac{1}{m}{2n\choose n}{m\choose{n+1}}$.
\end{cor}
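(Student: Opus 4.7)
My plan is to mimic the proof of Corollary \ref{chungdyck} essentially verbatim, substituting Theorem \ref{mgen} for Theorem \ref{gen}. The structure that made the previous corollary work depended on three ingredients: a partition of $\mathscr{L}_{n,m}$ into classes of size $m$, a count of those classes, and the fact that the parameter in question takes each value in $\{0,1,\ldots,m-1\}$ exactly once on each class. All three ingredients are already available here.

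First, I would introduce the same equivalence relation $\sim$ on $\mathscr{L}_{n,m}$ as before: $\dot{Q}=[Q;j]\sim \dot{P}=[P;i]$ whenever $Q=P_k$ for some $k\in[n+1]$, where $P_k$ is the $k$th cyclic permutation of $P$. The equivalence class of $\dot{P}$ is then exactly the set $\mathcal{PL}(P)=\{\dot{P}(i;j)\mid i\in[n+1],\ 0\le j\le x_i-1\}$, which has $m$ elements. Since the classes partition $\mathscr{L}_{n,m}$ and Lemma \ref{numberpointed} gives $|\mathscr{L}_{n,m}|=\binom{2n}{n}\binom{m}{n+1}$, the total number of classes is exactly $t=\frac{1}{m}\binom{2n}{n}\binom{m}{n+1}$.

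Next, I would invoke Theorem \ref{mgen}. It asserts that the map $\Gamma_P:[m]\to\mathcal{PL}(P)$ satisfies $PRML(\Gamma_P(r))=r-1$ for every $r\in[m]$, so each equivalence class contains exactly one pointed path of each pointed rightmost minimum length $0,1,\ldots,m-1$. Summing over all $t$ classes therefore gives
\[
d_{n,m,r} = t = \frac{1}{m}\binom{2n}{n}\binom{m}{n+1}
\]
for every $0\le r\le m-1$, which is the desired equality.

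There is essentially no obstacle here, since all the real content is packaged inside Theorem \ref{mgen}; the only thing one must be careful about is that the equivalence classes really do biject with $[m]$ (that is, that $|\mathcal{PL}(P)|=m$ without collapse), but this was already established in the discussion preceding Theorem \ref{gen} and used identically in Corollary \ref{chungdyck}. Thus the proof reduces to a one-line appeal to Theorem \ref{mgen} together with the class-counting argument above.
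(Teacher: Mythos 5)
Your proposal is correct and is exactly the argument the paper intends: its proof of this corollary consists of the single sentence that one argues ``similar to the proof of Corollary \ref{chungdyck},'' and you have simply written out that argument explicitly, using the same cyclic-permutation equivalence classes of size $m$, the class count $\frac{1}{m}\binom{2n}{n}\binom{m}{n+1}$ from Lemma \ref{numberpointed}, and Theorem \ref{mgen} in place of Theorem \ref{gen}. No differences in approach and no gaps.
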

\begin{proof} Similar to the proof of Corollary \ref{chungdyck}, we
can obtain the results as desired.
\end{proof}

\section{The application of the main theorem}

In fact, by Theorems \ref{gen} and  \ref{mgen}, we may find the
Chung-Feller theorems of many different  $(n,m)$-lattice paths on
the parameter: the pointed non-positive length and the pointed
rightmost minimum length. For example, we let $A$ and $B$ be two
finite subsets of the set $\mathbb{P}$. Let
$\mathcal{S}=\mathcal{S}_{A}\cup\mathcal{S}_{B}\cup\{(1,1)\}$, where
$\mathcal{S}_{A}=\{(2i-1,-1)\mid i\in A\}$ and
$\mathcal{S}_{B}=\{(2i,0)\mid i\in B\}.$ In \cite{Ma}, we have
proved the following corollary by the generating function methods.
Using Theorems \ref{gen} and  \ref{mgen}, we can  reobtain the
corollary.
\begin{cor}\label{resultsreprove1} Let $\mathscr{P}_{n,m}$ be the set of
the pointed lattice paths in the plane $\mathbb{Z}\times \mathbb{Z}$
which (1) only use steps in the set $\mathcal{S}$; (2) have $n+1$
steps; (3) go from the origin to the point (m,1). Then in
$\mathscr{P}_{n,m}$,
\item{(1) the number of the pointed lattice paths
with pointed non-positive length $r$ is equal to the number of the
pointed lattice paths with pointed non-positive length $0$ and
independent on $r$;\\
(2) the number of the pointed lattice paths with pointed rightmost
minimum length $r$ is equal to the number of the pointed lattice
paths with pointed rightmost minimum length $0$ and independent on
$r$.}
\end{cor}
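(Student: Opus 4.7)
The plan is to observe that $\mathscr{P}_{n,m}$ sits inside the collection of pointed $(n,m)$-lattice paths and is saturated with respect to the cyclic-shift equivalence used in the proof of Corollary~\ref{chungdyck}, so that Theorems~\ref{gen} and~\ref{mgen} restrict to $\mathscr{P}_{n,m}$ without modification. Concretely, I would first check that every $\dot{P}\in\mathscr{P}_{n,m}$ meets the definition of a pointed $(n,m)$-lattice path: each step $(x_i,y_i)\in\mathcal{S}$ has $y_i\in\{-1,0,1\}\subseteq[1-n,1]$ and $x_i\geq 1$, and $\sum x_i=m$ together with $n+1\geq 2$ forces $x_i\leq m-1$; the endpoint condition gives $\sum y_i=1$.

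Next I would use the equivalence relation $\dot{Q}\sim\dot{P}$ from the proof of Corollary~\ref{chungdyck}: two pointed paths are equivalent precisely when their underlying unpointed paths are cyclic shifts of one another. The key observation is that a cyclic permutation $P_k$ of a path $P$ uses exactly the same multiset of steps as $P$, so if $P$ only uses steps in $\mathcal{S}$ then every $P_k$ does too, and therefore every element of the equivalence class $\mathcal{PL}(P)=\{\dot{P}(i;j)\}$ lies in $\mathscr{P}_{n,m}$. Consequently the equivalence classes partition $\mathscr{P}_{n,m}$, each class having size $m$.

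For part~(1), Theorem~\ref{gen} provides, inside each class $\mathcal{PL}(P)$, a bijection $\Theta_P\colon [m]\to\mathcal{PL}(P)$ with $PNPL(\Theta_P(r))=r-1$. Restricting to those classes whose elements belong to $\mathscr{P}_{n,m}$, we conclude that each such class contributes exactly one pointed path of each pointed non-positive length $r\in\{0,1,\ldots,m-1\}$; hence the count is independent of $r$. Part~(2) follows in exactly the same way, using the bijection $\Gamma_P$ of Theorem~\ref{mgen} in place of $\Theta_P$, since $\Gamma_P$ is defined on the same equivalence classes $\mathcal{PL}(P)$.

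The only genuine thing to verify is that the equivalence classes of the ambient cyclic-shift relation stay inside $\mathscr{P}_{n,m}$, i.e.\ that the restriction $\mathcal{S}$ on step types is preserved under cyclic rotation and under re-rooting; this is immediate but is the single point where the specific structure of $\mathcal{S}=\mathcal{S}_A\cup\mathcal{S}_B\cup\{(1,1)\}$ enters. Once that is noted, the corollary is a direct specialization of Theorems~\ref{gen} and~\ref{mgen}, with no generating-function manipulation needed, which is precisely the reproof promised in the paragraph preceding the statement.
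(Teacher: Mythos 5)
Your proposal is correct and follows essentially the same route as the paper: the paper's proof also views elements of $\mathscr{P}_{n,m}$ as pointed $(n,m)$-lattice paths with steps confined to $\mathcal{S}$ and then invokes Theorems \ref{gen} and \ref{mgen} via the equivalence-class argument of Corollary \ref{chungdyck}. The one detail you make explicit that the paper leaves implicit --- that cyclic shifts preserve the step multiset, so each equivalence class lies entirely inside $\mathscr{P}_{n,m}$ --- is exactly the right point to check.
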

\begin{proof} (1) It is easy to see that a pointed lattice path $P$ in $\mathscr{P}_{n,m}$ can be view as a
pointed $(n,m)$-lattice path $(x_1,y_1)\ldots (x_{n+1},y_{n+1})$
such that $(x_i,y_i)\in \mathcal{S}$ for all $i\in[n+1]$. By Theorem
\ref{gen}, using a similar method as Corollary \ref{chungdyck}, we
get the results as desired.\\
(2) The proof is omitted.
\end{proof}

\subsection *{Acknowledgements}

\ \ \ \ The authors would like to thank Professor Christian
Krattenthaler for his valuable suggestions.

 \end{document}